\newcommand{\rank}{\hbox{rank}\ }
\newtheorem{Theorem}{Theorem}
\newtheorem{Proposition}{Proposition}
\newtheorem{Lemma}{Lemma}
\theoremstyle{definition}
\def\R{{\mathbb R}}
\def\Z{{\mathbb Z}}
\def\S{{\mathbb S}}
\DeclareMathOperator{\Iso}{Iso}
\DeclareMathOperator{\Fix}{Fix}
\DeclareMathOperator{\SO}{SO}
\DeclareMathOperator{\Oo}{O}
\begin{document}

\title[Quotient orbifolds of 3-manifolds of genus two]{On quotient orbifolds  of  hyperbolic 3-manifolds of genus two}

\author[A. Bruno]{Annalisa Bruno}
\address{A. Bruno} 

\email{annalisa.bruno@mail.polimi.it}

\author[M. Mecchia]{Mattia Mecchia*}
\address{M. Mecchia: Dipartimento Di Matematica e Geoscienze, Universit\`{a} degli Studi di Trieste, Via Valerio 12/1, 34127, Trieste, Italy.} \email{mmecchia@units.it}

\subjclass[2010]{57M25, 57M10, 57M60}
\keywords{Genus two 3-manifold, 3-bridge knot, 2-fold branched covering, quotient orbifold.}

\thanks{$^{*}$Partially supported by the FRA 2013 grant  ``Geometria e topologia delle variet\`{a}'', Universit\`{a}  di Trieste, and by the  PRIN 2010-2011 grant ``Variet\`{a} reali e complesse: geometria, topologia e analisi armonica''.}

\date{\today}

\begin{abstract}
We analyze the  orbifolds that can be obtained as quotients of   hyperbolic 3-manifolds admitting a Heegaard splitting of  genus two by their orientation preserving isometry groups. The  genus two hyperbolic 3-manifolds are exactly the hyperbolic 2-fold branched coverings of 3-bridge links. If the 3-bridge link is a knot, we prove that the underlying topological space of the quotient orbifold is  either the 3-sphere or a lens space and we describe the combinatorial setting of the singular set for each possible isometry group.  In the  case of 3-bridge links with two or three components, the situation is more complicated and we show that the underlying topological space is the 3-sphere, a lens space or a prism manifold. Finally we present a infinite family of hyperbolic 3-manifolds that are simultaneously the 2-fold branched covering of two inequivalent knot, one with bridge number three and the other one with bridge number strictly greater than three.

\end{abstract}

\maketitle

\section{Introduction}

In this paper we consider quotient orbifolds obtained from the action of finite groups on hyperbolic 3-manifolds admitting a Heegaard splitting of genus two.

\smallskip

A genus $n$ {\sl Heegaard splitting} of a closed orientable 3-manifold $M$ is a
decomposition of $M$ into a union $V_1\cup V_2$ of two handlebodies of genus $n$
intersecting in their common boundary (the Heegaard
surface of the splitting). The {\sl genus of $M$} is the lowest genus for which
$M$ admits a Heegaard splitting.
The only  3-manifold of genus 0 is the 3-sphere while the genus one 3-manifolds are the lens spaces.
We remark that two is the lowest possible genus for a hyperbolic manifold. 

Quotient orbifolds  of 3-manifolds admitting a Heegaard splitting of genus 2   were also studied   by J.Kim  by using different methods (see \cite{K}). In his paper J.Kim considered  only groups leaving invariant the Heegaard splitting of genus 2. Here we do not make this assumption. On the other hand the results in \cite{K} include also  non hyperbolic 3-manifolds.

\smallskip

We recall that 3-bridge knots are strictly related to genus two 3-manifolds.
 An
{\sl $m$-bridge presentation} of a knot $K$ in the 3-sphere $S^3$ is a
decomposition of the pair $(S^3,K)$ into a union
$(B_1,a_1) \cup(B_2,a_2)$ where $B_i$ for $i=1,2$ is a 3-ball and $a_i$ is a
set of $m$ arcs which is trivial in
$B_i$. We shall say that $K$ is an $m$-bridge knot if $m$ is the minimal
number for which $K$ admits an $m$-bridge presentation.

\smallskip

Now a genus two closed orientable surface admits a {\sl hyperelliptic
involution}, (i.e. the quotient of the surface by the involution
is $S^2$): this involution has the property that, up to isotopy, any
homeomorphism of the surface commutes with it.
So for any genus two Heegaard splitting of $M$ there exists an
orientation preserving involution of $M$, which we
shall also call {\sl hyperelliptic}, which leaves invariant the Heegaard
splitting and induces the hyperelliptic
involution on the Heegaard surface (in contrast with the two-dimensional
case, a genus two 3-manifold admits, in
general, more than one hyperelliptic involution, even up to isotopy). The
quotient of $M$ by this involution is
topologically $S^3$ and its singular set is a link $L$.  In this case the Heegaard splitting of $M$ naturally induces a 3-bridge presentation $(B_1,a_1)\cup(B_2,a_2)$ of $L$ where $(B_i,a_i)$ for $i=1,2$ is the quotient of $V_i$.

Conversely, a sphere that induces an $m$-bridge presentation of $L$ lifts to a Heegaard surface of genus $m-1$ of the 2-fold branched covering of $L$.  In particular a 3-bridge presentation  induces a genus two Heegaard splitting of the 2-fold branched covering.

We can conclude that the  hyperbolic 3-manifolds of genus two are exactly  the hyperbolic 3-manifolds that are the  2-fold branched covering of a  3-bridge link. This representation  is not unique, in fact there exist inequivalent 3-bridge knots with the same hyperbolic 2-fold branched covering (for an example see \cite[Section 5]{MR}). In \cite{R} it is proved that a hyperbolic 3-manifold of genus two is the 2-fold branched covering of at most three 3-bridge links. The representation of 3-manifolds as 2-fold branched coverings of knots and links have been extensively studied (see for example the survey by L.Paoluzzi \cite{P} and the recent results by J.E.Greene
 \cite{G})

\smallskip

In this paper we prove the following theorem about the structure of the quotient orbifolds of  hyperbolic 3-manifolds of genus two.
We recall that by the Thurston orbifold geometrization theorem, any periodic  diffeomorphism of a hyperbolic 3-manifold $M$ is conjugate to an isometry of $M$, so we can suppose that the covering transformation of a 3-bridge link is an isometry. 
In the following theorem we consider only link symmetries that preserve the orientation of $S^3$. 

\begin{Theorem}\label{theo:main}
Let $L$ be a 3-bridge link and let $M$ be the 3-manifold of genus 2 that is the 2-fold branched covering of $L$
Suppose that $M$ is hyperbolic  and denote by $G$  the orientation preserving isometry group of $M$.
\begin{enumerate}

\item If $L$ is a knot, then the underlying topological space of $M/G$ is either $S^3$ or a lens space and the combinatorial setting of the singular set of $M/G$ is represented in Figure~\ref{theorem-table}.
If the underlying topological space is a lens space, then  the covering transformation of $L$ is central in $G.$

\item If $L$ has two or three components, the underlying topological space of $G/M$ is $S^3$, a lens space or a prism manifold. If the underlying topological space is a prism manifold, then  

\begin{itemize}

\item
the  group of symmetries of $L$ fixing   setwise each component and preserving the orientation of $S^3$ is a non-trivial  cyclic group, which acts freely on $S^3;$

\item  the symmetries of $L$ preserving the orientation of $S^3$ induce a group acting faithfully on the set of the components of $L$ that is isomorphic to the symmetry group $\mathcal{S}_n$ where $n$ is the number of the components.

\end{itemize}

\end{enumerate}

\end{Theorem}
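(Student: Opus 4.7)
Let $\tau\in G$ denote the covering involution of the double branched cover $M\to S^3$ over $L$; by Thurston's orbifold theorem $\tau$ may be taken as an isometry. By Reni's bound~\cite{R}, the $G$-conjugacy class of $\tau$ has size $k\in\{1,2,3\}$, each conjugate being a hyperelliptic involution for a distinct 3-bridge presentation. Setting $N:=N_G(\langle\tau\rangle)$, the triviality of $\Aut(\langle\tau\rangle)$ forces $N=C_G(\tau)$, so $H:=N/\langle\tau\rangle$ acts on the pair $(S^3,L)$ by orientation-preserving diffeomorphisms leaving $L$ invariant. By the orbifold theorem this action is conjugate to an orthogonal action, so $H\hookrightarrow\SO(4)$ and $|M/N|=|S^3/H|$ is a spherical 3-manifold. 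The full quotient $M/G$ is then a further quotient of $M/N$ by the induced action of $G$ on the $N$-orbit space, whose image has order at most $6$.

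For part (1), if $L$ is a knot then $H$ is a subgroup of the orientation-preserving symmetry group of $L\subset S^3$. Such groups for hyperbolic 3-bridge knots are cyclic or dihedral with well-understood rotation axes; a case analysis of each possibility, together with the distinguished involution $\tau\in N$, shows that $|S^3/H|$ is $S^3$ or a lens space and yields the combinatorial singular diagrams of Figure~\ref{theorem-table}. When $k>1$, one inspects the extensions by cosets $G/N$ and verifies that $|M/G|$ remains in this list. For the converse, if $|M/G|$ is a lens space then $\pi_1(|M/G|)$ is cyclic; but a nontrivial conjugate $\tau'=g\tau g^{-1}\neq\tau$, together with $\tau$, generates a non-abelian dihedral subgroup of $G$ that projects to a non-cyclic subgroup of $\pi_1(|M/G|)$, a contradiction. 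Hence $\tau$ is central in $G$, as required.

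For part (2), with $L$ having $n\in\{2,3\}$ components, $H$ may permute the components of $L$. Let $H_0\leq H$ be the subgroup preserving each component setwise; then $H/H_0$ embeds into $\mathcal{S}_n$. Arguing as in part (1), $|S^3/H_0|$ is $S^3$ or a lens space, while the further quotient by $H/H_0$ can now produce a prism manifold. A direct analysis of the possible group extensions shows that a prism quotient forces (a) $H_0$ to act \emph{freely} on $S^3$, so $H_0$ is a nontrivial cyclic group and $S^3/H_0$ is a genuine lens space, and (b) the image of $H/H_0$ in $\mathcal{S}_n$ to be the full symmetric group $\mathcal{S}_n$; smaller subgroups of $\mathcal{S}_n$, or a non-free $H_0$, would combine to yield only $S^3$ or lens space quotients. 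The principal technical obstacle throughout is ruling out the remaining spherical underlying spaces (tetrahedral, octahedral, icosahedral): this rests on the strong restrictions that a 3-bridge configuration imposes on admissible rotational symmetries of $L$, combined with the careful enumeration of singular loci needed to realise the diagrams in Figure~\ref{theorem-table}.
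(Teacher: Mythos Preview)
Your outline has the right global shape but contains two genuine gaps that the paper's proof avoids.

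First, you pass from $M/N$ to $M/G$ by ``a further quotient by the induced action of $G$ on the $N$-orbit space''. This only makes sense when $N\lhd G$. In the knot case with $k=3$ the paper's classification (from~\cite{MR}) allows $G\cong\mathcal{S}_4$; here $N=C_G(\tau)$ is a dihedral group of order $8$ and is \emph{not} normal in $\mathcal{S}_4$, so there is no induced action of $G$ (or $G/N$) on $M/N$. The paper handles the non-central case not via the normalizer of $\tau$ but by invoking the dichotomy of~\cite{MR}: either $\tau$ is central in $G$, or $G$ is isomorphic to one of a short explicit list of subgroups of $\Z_2\times\mathcal{S}_4$. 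For each such group it builds a subnormal series (e.g.\ $\langle t_1\rangle\lhd\langle t_1,t_2\rangle\lhd\mathbb{A}_4\lhd\mathcal{S}_4$) through the Klein four-group generated by commuting hyperelliptic involutions, and computes the quotient step by step. The conclusion that the underlying space is $S^3$ in all these cases is obtained by direct inspection, not by a structural argument via $N$.

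Second, your proof that ``$|M/G|$ a lens space implies $\tau$ central'' is incorrect on two counts. Two distinct hyperelliptic involutions of a genus-two hyperbolic $3$-manifold \emph{commute} (this is~\cite{BS,RuS}, used explicitly in the paper), so $\tau$ and $\tau'$ generate $\Z_2\times\Z_2$, not a non-abelian dihedral group. Moreover there is no natural homomorphism from subgroups of $G$ to $\pi_1(|M/G|)$: elements of $G$ with fixed points become trivial in the orbifold fundamental group of $M/G$, and the map to $\pi_1$ of the underlying space is yet another step removed. The paper does not argue this implication abstractly at all; it simply observes, after the case-by-case computation, that every non-central case produced $S^3$.

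For part~(2) your sketch is closer to the paper's, but the exclusion of tetrahedral, octahedral and icosahedral quotients is not a consequence of ``restrictions that a 3-bridge configuration imposes''; the paper instead shows that $\bar G$ has an abelian subgroup of index two, hence preserves a Seifert fibration of $S^3$, and then reads off the possible base orbifolds and underlying spaces from the tables in~\cite{MS} and~\cite{Dun}.
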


\begin{figure}[h!]
\begin{center}
\includegraphics[width=9cm]{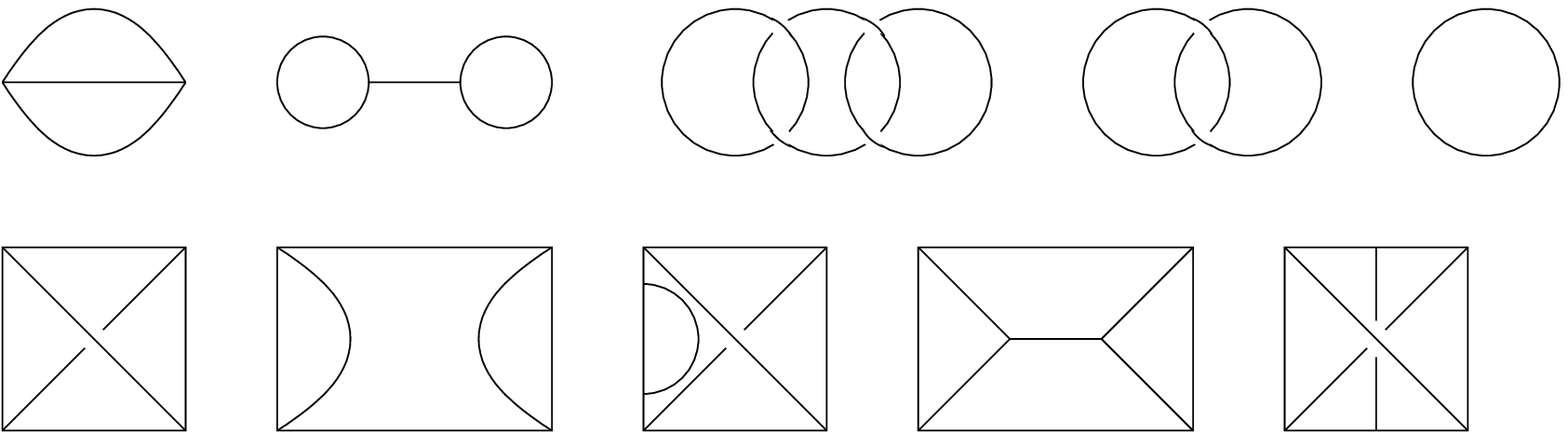}
\caption{Admissible singular sets for $M/G$.}
\label{theorem-table}
\end{center}
\end{figure}

The proof of the theorem is based on the characterization of the isometry group of the hyperbolic 3-manifolds of genus two given in \cite{MR}. 

The underlying topological spaces are analyzed  both for knots and  links with more than one component. In the knot case, if the underlying topological space is not $S^3$, then the hyperelliptic involution of $L$ is central and each element of $G$  projects to a symmetry of $L$
(the 2-fold branched covering  has no `` hidden symmetries'').  We remark also that the situation of quotient orbifolds with underlying topological space that is neither the 3 -sphere nor a lens space is very specific.

More details about the quotients are contained in  Section~\ref{knot} and \ref{link}. In particular in Figure~\ref{final-table}  the case of knots is summarized, distinguishing  for each group, that can occur as $G$, the possible combinatorial settings of the  singular set of $M/G.$
In principle a similar analysis   could be done when $L$ is not a knot  but in the link case the number of possible graphs for the singular set of $M/G$ is very large and we should obtain a very  long and  complicated list.

We know that  the quotient of a genus two 3-manifold by an hyperelliptic involution is an orbifold with underlying topological space $S^3$.  One might ask if each involution that gives $S^3$ as underlying topological space of the quotient  is hyperelliptic. The answer is no, in fact in the last section of the paper we present an infinite family  of  genus two 3-manifolds that are the 2-fold branched coverings of  knots with bridge number strictly greater than four. Since the bridge number of the knots is not three, the covering involutions of these branched coverings are   not  hyperelliptic.   This family gives  also examples of 2-fold branched coverings  where a sphere giving a minimal bridge presentation  of the knot  does not lift to a Heegaard surface of minimal  genus. A different method  to obtain examples of this phenomenon  and some comments about it can be found in [J].


\section{Preliminaries}\label{prel}

In this section we present some preliminary results about finite group action on 3-manifolds, and in particular on the 3-sphere.

\begin{Proposition} \label{prop:normalizer}
Let $G$ be a finite group of orientation preserving diffeomorphisms
of a closed orientable 3-manifold and let $h$ be an element in $G$ with nonempty
connected fixed point set. Then the normalizer $N_G h$ of the subgroup generated
by $h$ in $G$ is isomorphic to a subgroup of a semidirect product
$\Z_2 \ltimes  (\Z_a \times \Z_b)$,
for some positive integers $a$ and $b$, where a generator of $\Z_2$ (an $h$-reflection)
acts on the normal subgroup $\Z_a \times  \Z_b$ of $h$-rotations by sending each element to
its inverse. 
\end{Proposition}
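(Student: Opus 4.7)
The plan is to linearize the action of $N=N_G\langle h\rangle$ on an invariant tubular neighborhood of $C=\Fix(h)$ and then read off the claimed semidirect product structure from the standard linear model.

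By averaging I may assume $G$ acts by isometries for some Riemannian metric on $M$. Local linearization at each fixed point of $h$, an orientation-preserving finite-order isometry, shows that $\Fix(h)$ is a totally geodesic one-dimensional submanifold, hence a closed geodesic circle $C$ by the connectedness hypothesis. Any $g\in N$ conjugates $h$ to some $h^j$ with $j$ coprime to the order of $h$, and since $\Fix(h^j)=\Fix(h)=C$, the group $N$ preserves $C$ setwise. By the equivariant tubular neighborhood theorem, together with the triviality of the normal bundle of the orientable $1$-submanifold $C$ in the orientable $M$, an $N$-invariant tubular neighborhood $U$ of $C$ is $N$-equivariantly diffeomorphic to $S^1\times D^2$, with $N$ acting by orientation-preserving bundle automorphisms preserving the core.

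I next linearize the $N$-action on $U$, so that $N$ embeds in the linear group
\[
H=\{(a,b)\in \Oo(2)\times\Oo(2):\det a=\det b\}
\]
acting coordinate-wise on $S^1\times D^2$. The group $H$ has a natural semidirect product structure $\Z_2\ltimes(\SO(2)\times\SO(2))$: its index-two subgroup $\SO(2)\times\SO(2)$ consists of pairs of rotations, and any representative of the nontrivial coset is a pair $(r_0,s_0)$ of reflections satisfying $r_0\alpha r_0^{-1}=\alpha^{-1}$ and $s_0\beta s_0^{-1}=\beta^{-1}$, so conjugation by such an element inverts both factors. With this in hand the conclusion is immediate: the intersection $N^+:=N\cap(\SO(2)\times\SO(2))$ is a finite subgroup of the $2$-torus, hence isomorphic to $\Z_a\times\Z_b$ for some positive integers $a,b$, and these are precisely the $h$-rotations. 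Any $h$-reflection $f\in N\setminus N^+$ (if one exists) lies in the reflection coset of $H$ and conjugates every element of $N^+$ to its inverse, so $N$ embeds into $\Z_2\ltimes(\Z_a\times\Z_b)$ with $\Z_2$ acting by inversion.

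The main obstacle is the linearization step. The pointwise stabilizer $K$ of $C$ acts on the normal bundle of $C$ by isometries whose rotation angle is a continuous map from the connected $C$ into the discrete set of $|K|$-th roots of unity, hence constant; this already identifies $K$ with a cyclic subgroup of $\SO(2)$. Extending the analogous rigidity to the full $N$-action calls for choosing a trivialization of the normal bundle in which every element of $N$ becomes a standard bundle map, which is the classical fact that every finite smooth action on the solid torus preserving its core is equivalent to a linear one. This is the technical heart of the argument.
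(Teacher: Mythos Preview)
Your proof is correct and follows essentially the same strategy as the paper: both arguments reduce to analyzing the $N_G\langle h\rangle$-action on a tubular neighborhood of the fixed circle, identify that action with a standard linear action on a solid torus, and then read off the generalized-dihedral structure from the linear model. The paper is terser---it invokes Newman's theorem to ensure the action on the neighborhood is faithful and then simply asserts the action is ``standard''---whereas you obtain faithfulness implicitly from your averaged Riemannian metric (an isometry fixing an open set of a connected manifold is the identity) and spell out the linear model $H=\{(a,b)\in\Oo(2)\times\Oo(2):\det a=\det b\}\cong\Z_2\ltimes(\SO(2)\times\SO(2))$ more explicitly; but the substance is the same, including the shared reliance on the classical linearization of finite actions on the solid torus.
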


\begin{proof}
The fixed point set of $h$ is a simple closed curve $K$, which is invariant under the
action of $N_Gh$. By a result of Newman (see~\cite[Theorem 9.5]{Br}), a periodic transformations of a manifold which is the identity on an open subset is the identity. Thus the
action of an element of $N_Gh$ is determined by its action on a regular neighborhood of $K$ where it is a standard action on a solid torus. Every element of $N_Gh$ restricts to a reflection (strong inversion) or to a (possibly trivial) rotation on $K$. 
The subgroup of $h$-rotations has index one or two in $N_Gh$ and is abelian.
It has a cyclic subgroup (the elements fixing $K$ pointwise) with cyclic quotient
group, so it is abelian of rank at most two.
\end{proof}

We consider now the finite subgroups of $\SO(4)$ and their action on the unit
sphere $S^3 = \{(x, y, z,w)\in \R^4|x^2 + y^2 + z^2 + w^2=1\}.$ 
We recall that a non-trivial element of prime order in $\SO(4)$ either acts freely or fixes pointwise a simple closed curve in $S^3$. The finite subgroups of $\SO(4)$ are classified by Seifert and Threlfall (\cite{TSe1} and \cite{TSe2}). In Lemma~\ref{lemma:orth-group}  we collect some properties of these groups which we need in this paper. The results in Lemma~\ref{lemma:orth-group} can be obtained from the classification and the results contained in~\cite{MS}, but here a direct proof seems to be more suitable. Point 2. of Lemma~\ref{lemma:orth-group} is taken from  \cite{BBW}, that was  available at http://www.tufts.edu/$\sim$gwalsh01; the paper is no longer available,  since it was replaced by \cite{BBCW}, where this statement is not considered.

\begin{Lemma}\label{lemma:orth-group}
Let $G$ be a finite subgroup of $\SO(4).$

\begin{enumerate}
\item Suppose that $G$ is abelian, then either it has rank at most two or it is an
elementary 2-group of rank three. If $G$ acts freely on $S^3$, it is cyclic. If $G$ has
rank at most two, then either at most two simple closed curves of $S^3$ are fixed
pointwise by some nontrivial element of $G$ or $G$ is an elementary 2-group
of rank two and the whole $G$ fixes two points (where the fixed-point sets of the three involutions meet).
\item If $G$ is generalized dihedral (i.e. $G$ is a semidirect product of an  abelian
subgroup of index two with a subgroup of order whose  generator
acts dihedrally on the abelian subgroup of index two), then the underlying
topological space of $S^3/G$ is $S^3.$
\item If $G$ has a cyclic normal subgroup $H$ such that $G/H$ is cyclic of odd order, then
$G$ is abelian.
\end{enumerate}
\end{Lemma}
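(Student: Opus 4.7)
The plan is to work throughout with the double cover $\pi\colon S^{3}\times S^{3}\to\SO(4)$, $(p,q)\mapsto(x\mapsto p\,x\,q^{-1})$, whose kernel is $\{\pm(1,1)\}$, combined with the classification of finite subgroups of $S^{3}$ (cyclic, binary dihedral, binary polyhedral). The elementary input I would invoke repeatedly is that the only finite subgroups of $S^{3}$ whose commutator subgroup lies in $\{\pm 1\}$ are the cyclic groups and the quaternion group $Q_{8}$.

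For part (1), set $\tilde G=\pi^{-1}(G)$ and split on whether $\tilde G$ is abelian. In the abelian case, $\tilde G$ lies in a maximal torus $S^{1}\times S^{1}$ of $S^{3}\times S^{3}$, so both $\tilde G$ and $G$ have rank at most two. In the non-abelian case $[\tilde G,\tilde G]\subset\{\pm(1,1)\}$ forces each projection $\pi_{i}(\tilde G)\subset S^{3}$ to be cyclic or $Q_{8}$, with at least one equal to $Q_{8}$; since $Q_{8}/\{\pm 1\}\cong\Z_{2}\times\Z_{2}$, a short case analysis of the possible central extensions will show that the abelian quotient $G$ attains rank three exactly when $G\cong\Z_{2}^{3}$. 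The freeness claim I would derive from the classical fact that a finite group acting freely on $S^{3}$ has periodic cohomology, so that an abelian such group is cyclic. For the fixed-circle count in the rank-at-most-two case, I would note that each non-trivial element of $\SO(4)$ fixing a circle is a rotation about a $2$-plane, and two commuting such rotations have fixed $2$-planes that are either equal, orthogonal complements, or share a common line (the latter case forcing both rotations to be involutions); the exceptional $\Z_{2}\times\Z_{2}$ configuration arises precisely when three involutions' fixed $2$-planes share such a common line, giving three fixed circles meeting in its two antipodal unit points.

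For part (2), I would pick the involution $t\in G$ inverting $A$; since it inverts (rather than centralizes) non-trivial elements of $A$ it is not $\pm I$, so its fixed set in $S^{3}$ is a circle. The orbit space decomposes as $S^{3}/G=(S^{3}/A)/\langle\bar t\rangle$, where $\bar t$ is the well-defined involution on $S^{3}/A$ induced by $t$. Using the explicit linear model from part (1), in which $A$ lies in the maximal torus $\SO(2)\oplus\SO(2)$ acting on $S^{3}\subset\C^{2}$ (the $\Z_{2}^{3}$ case being dispatched by inspection), a direct computation identifies $(S^{3}/A)/\langle\bar t\rangle$; equivalently, I would appeal to the classical fact that a suitable involution of a lens space has quotient $S^{3}$, as seen by realizing the lens space as the $2$-fold branched cover of $S^{3}$ along a $2$-bridge link.

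For part (3), let $H$ be the cyclic normal subgroup with $G/H$ cyclic of odd order. Conjugation gives a homomorphism $G/H\to\Aut(H)$ whose image is contained in the image of $N_{\SO(4)}(H)/C_{\SO(4)}(H)$; an eigenvalue analysis in $\SO(4)$ (using the Weyl group $\Z_{2}\times\Z_{2}$ of a maximal torus when a generator of $H$ is regular, and a direct eigenvalue-pair comparison when it is not) will show that this image is always a $2$-group. Since $|G/H|$ is odd the composition is trivial, so $G$ centralizes $H$; then $G=\langle H,g_{0}\rangle$ for any lift $g_{0}$ of a generator of $G/H$, and since $g_{0}$ commutes with $H$ the group $G$ is abelian. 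The hardest step will be the rank-three case in part (1): one must rule out every non-abelian lift $\tilde G$ in $Q_{8}\times Q_{8}$ or $Q_{8}\times\Z_{n}$ other than the one producing $\Z_{2}^{3}$; once this structural input is in hand, the arguments for (2) and (3) reduce to direct linear-model and eigenvalue computations.
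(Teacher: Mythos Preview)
Your approach is genuinely different from the paper's and is broadly sound, but there is a small gap in Part~(2), and overall your route is less self-contained.

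For Part~(1) the paper simply observes that commuting elements of $\SO(4)$ can be simultaneously conjugated into block-diagonal form with $2\times 2$ blocks in $\Oo(2)$, and then reads off all the assertions by elementary linear algebra. Your route through the double cover $S^3\times S^3\to\SO(4)$ and the subgroup classification of $S^3$ also works, but note that your dichotomy ``$\tilde G$ abelian $\Rightarrow$ $G$ in a maximal torus; $\tilde G$ non-abelian $\Rightarrow$ $G\cong\Z_2^3$'' is not quite right: the diagonal group $\{I,\operatorname{diag}(1,1,-1,-1),\operatorname{diag}(1,-1,1,-1),\operatorname{diag}(1,-1,-1,1)\}\cong\Z_2\times\Z_2$ lifts to the diagonal copy of $Q_8$ (non-abelian), yet has rank two and is \emph{not} contained in any maximal torus of $\SO(4)$. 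This is precisely the exceptional $\Z_2\times\Z_2$ in the statement, so your case analysis must produce it alongside $\Z_2^3$.

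For Part~(2) the paper argues directly that $G$ preserves a $2$-plane: starting from an $R$-invariant plane $P$ it analyzes $P\cap\sigma(P)$ and in each case exhibits a $G$-invariant plane explicitly. Then $G$ respects the standard genus-one Heegaard splitting of $S^3$, and the generalized-dihedral action on each solid torus has a $3$-ball as quotient, so $S^3/G\cong S^3$. Your plan to compute $(S^3/A)/\langle\bar t\rangle$ in the lens-space model can be carried out, but it relies on $A$ sitting in $\SO(2)\oplus\SO(2)$, which fails for the exotic $\Z_2\times\Z_2$ above; you would need to dispatch that case separately. The paper's invariant-plane argument handles all cases uniformly and avoids appealing to outside facts about involutions on lens spaces.

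For Part~(3) your Weyl-group framing is essentially a repackaging of the paper's argument. The paper writes $\sigma\psi\sigma^{-1}=\psi^{h}$, observes that $\sigma$ permutes the complex eigenvalues of $\psi$ via $\lambda\mapsto\lambda^{h}$, reduces to the case where the two $2\times 2$ blocks of $\psi$ have the same order, and since there are then at most four eigenvalues concludes that $\sigma^{4}$ centralizes $\psi$; oddness of $|G/H|$ finishes. Your claim that the image of $N_{\SO(4)}(H)/C_{\SO(4)}(H)$ in $\Aut(H)$ is a $2$-group is exactly this computation viewed abstractly.
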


\begin{proof}

1) Since G is abelian, the elements of the group can be simultaneously
conjugate to block-diagonal matrices, i.e. $G$ can be conjugate to a group such
that each element has the following form:

\begin{eqnarray*}
\left(
\begin{array}{c|c}
A & 0_2
\\
\hline
0_2 & B
\end{array}
\right)
\end{eqnarray*}

where $A,B \in \Oo(2)$ and $0_2$ is the $2\times 2$ matrix whose entries are all zero. Then,
by using  standard arguments from linear algebra,  Point 1 can be proved.

\medskip

2) In this case the group $G$ has an abelian subgroup $R$ of rank at most two
and index two. Let   $\phi$ and $\psi$ be two elements in $\SO(4)$ that generate $R$.
Since $R$ is abelian, the action of $R$ leaves setwise invariant a 2-dimensional
plane $P$ in $\R^4$, which corresponds to a simple closed curve in $S^3$. The group $R$
leaves invariant also $P^{\perp}$, and $P \oplus P^{\perp}$ is a $R$-invariant decomposition of $\R^4.$ We
will prove that also the action of $G$ on $\R^4$ leaves invariant setwise a 2-dimensional
plane, that may be different from $P$.
Let $\sigma$ be  an involution not in $R$. We have: $\psi (\sigma (P))= \sigma (\psi^{-1} (P))= \sigma (P)$ and $\phi (\sigma (P))= \sigma (\phi^{-1} (P))= \sigma (P)$, this implies that the set $Q=P \cap \sigma (P)$ is $G$-invariant.

If $Q=P$, then $P$ is a 2-dimensional plane left setwise invariant by $P.$

If $Q$ is a subspace of dimension 1, then we can construct explicitly another plane that is $G$-invariant.
Let $\{ v,w \}$ be an orthonormal basis of $P$ such that $v \in Q$.
Since $\psi (v)= \pm v$ and $\phi(v)= \pm v$ and $P$ is both $\psi$- and $\phi$-invariant, we must have that  $\psi(w)= \pm w$ and $\phi(w)= \pm w$.
Therefore the plane spanned by the vectors $v$ and $w + \sigma (w)$ is $G$-invariant.

If $Q=\{ 0 \}$, we fix again an orthonormal basis $\{ v,w \}$ of $P$.
If $\psi$ or $\phi$ acts as a reflection on $P$, then a normal subgroup of $G$ leaves pointwise invariant a 2-dimensional plane which is left setwise invariant by $G.$
We can suppose that  $\psi$ and $\phi$ act as rotations on the plane $P$ and we will prove that  the plane spanned by the couple of vectors $v + \sigma (v)$ and $w - \sigma(w)$ is a $G$-invariant plane.
In fact we have that $\psi (v + \sigma (v))= \psi (v) + \psi(\sigma (v))= \psi (v) + \sigma (\psi^{-1}(v))$.
Supposing $\psi$ acting on the basis in the following way: $\psi (v)= \alpha v + \beta w$, $\psi(w)= - \beta v + \alpha w$, we have that $\psi^{-1}(v)= \alpha v - \beta w$ and $\psi^{-1}(w)= \beta v + \alpha w$.
Then $\psi (v + \sigma (v))= \alpha v + \beta w + \sigma (\alpha v - \beta w)= \alpha (v + \sigma(v)) + \beta (w - \sigma (w))$ and $\psi(w - \sigma (w))= - \beta v + \alpha w - \sigma (\beta v + \alpha w)= - \beta (v + \sigma (v)) + \alpha (w - \sigma(w))$.
The same argument works with $\phi$, since it is a rotation on $P$ too, moreover $\sigma (v + \sigma (v))= v + \sigma (v)$ and $\sigma (w - \sigma(w))= - (w - \sigma(w))$.
This completes the proof of the fact that $G$ leaves invariant a 2- plane in $\R^4$.

At this point we can suppose that $\sigma(x,y,z,w)= (x,-y,z,-w)$, up to conjugacy.
The whole isometry group G respects the Heegaard splitting $S^3 = T_1 \cup T_2$, where $T_1 = \{(x, y,z,w) \in S_3 : x^2+y^2 \geq 1/2\}$ and $T_2 = \{(x, y,z,w)\in S^3 :
x^2+y^2 \leq 1/2\}$. We obtain that $G$ acts on the solid tori $T_1$ and $T_2$ in such a way
that their quotients by $G$ are two solid balls $B_1$ and $B_2$; then $S_3/G$ is given by the
gluing of a couple of solid balls, that is known to be a 3-sphere $S_3.$

3) Let $\psi$ be a generator of $H$ and $\sigma$ be an element of $G$ such that $\sigma H$ is a generator of $G/H.$
We denote by $h$ the natural number  smaller then the order of $\psi$ such that $\sigma\psi \sigma^{-1}=\psi^h$

If $\psi$ (or any nontrivial element of $H$) fixes pointwise a 2-dimensional plane $P$ in $\R^4$, then $\sigma$ fixes setwise the same 2-dimensional plane  and $P\oplus P^{\perp}$ is a $G$-invariant decomposition of $\R^4.$ In this case we  can reduce the problem to the analysis of the finite subgroups of $\Oo(2)$ and we are done.

We know that $\psi$ can be conjugate by a matrix in $\SO(4)$ in the form:

\begin{eqnarray*}
\left(
\begin{array}{c|c}
A & 0_2
\\
\hline
0_2 & B
\end{array}
\right).
\end{eqnarray*}

If $A$ and $B$ have different orders, then  we obtain in $H$ a nontrivial element fixing pointwise a 2-dimensional plane, hence we get the thesis.

We can suppose that $A$ and $B$ have the same order.

We consider $\psi$ as a complex matrix; if  $v$ is an eigenvector of $\phi$ corresponding to the eigenvalues $\lambda$, then $\sigma(v)$ is an eigenvector for $\phi$ corresponding to the eigenvalues $\lambda^{h}.$ 
 
Suppose first that  $\lambda^{h}=\lambda$ for an eigenvalue $\lambda$; since the multiplicative order of $\lambda$ equals the order of $\psi$, we obtain $\sigma\psi \sigma^{-1}=\psi$  and we get the thesis.

Then we can suppose that $\lambda^{h} \neq \lambda$ for each eigenvalue $\lambda$; $\sigma$ induces a bijection on the set of eigenvalues that does not fix  any of them.
If one of the eigenvalues is -1, then the order of $\psi$ is two and the matrix is diagonal ($A$ and $B$ have the same order); in this case $\psi$ is central in $G.$
Therefore $\psi$ has two or four different eigenvalues, in any case $\sigma^4$ leaves invariant each eigenvalue, hence $\sigma^4$ commutes with $\psi$.
Since the order of $\sigma H$ is odd,  $\sigma^4 H$ generates $G/H$ and we obtained that $G$ is an abelian group.

\end{proof}

\section{The knot case}\label{knot}

In this section we prove Theorem~\ref{theo:main} in the knot case.

We recall that,  since $L$ is a knot,  $M$  is a  $\Z_2$-homology sphere and, by Smith theory,  the fixed point set of an involution acting on $M$ is either  empty or a simple closed curve. 

The method we use to investigate $M/G$  is to pass through iterated quotients using a subnormal series of  subgroups of $G$.
This method can be applied thanks to the fact that, if $G$ is a group acting on a manifold $M$ and $H$ is a normal subgroup of $G$, then the action of $G$ induces an action of $G/H$ on the quotient $M/H.$

In~\cite{MR} it is proved that either   there exists a hyperelliptic involution central in $G$  or $G$ is isomorphic to a subgroup of $\Z_2\times \S_4$. We consider the two cases.

\medskip

\noindent
\textbf{Case 1: $G$ contains  a central hyperelliptic involution.}

\smallskip

Let $t$ be a hyperelliptic involution contained in the center of $G$ and we suppose that  $L$ is  the hyperbolic 3-bridge knot that is the projection of the fixed point set of $t$ on $M/\langle t \rangle\cong S^3$.

The whole group $G$ projects to $M/\langle t \rangle$ and the quotient $M/G$ can be factorized through $(M/\langle t \rangle)/(G/\langle t \rangle)$.  The Thurston orbifold geometrization theorem (see~\cite{BLP}) and  the spherical space form  conjecture for free actions on $S^3$  proved by Perelmann (see~\cite{MoT}) imply that every finite group of diffeomorphism of the 3-sphere is conjugate to a finite subgroup of $\textrm{SO}(4)$. The finite subgroups of  $\textrm{SO}(4)$ were classified by Seifert and Threlfall (\cite{TSe1} and \cite{TSe2}, more details can be found in~\cite{MS}). Thus we can suppose that $G/\langle t \rangle$ is a group of isometries of $S^3$  leaving  setwise invariant the knot $L.$ We remark that, since $L$ is not a trivial knot, by the positive solution of Smith conjecture, $G/\langle t \rangle$ acts faithfully on $L$. In particular  $G/\langle t \rangle$ is cyclic or dihedral (see Proposition~\ref{prop:normalizer}). 

If   the fixed point set (that may be empty) of a symmetry of $L$  is disjoint from the knot, we call it {\it a $L$-rotation}. If the fixed point set of a symmetry of $L$ intersects the knot in two points, we call it {\it a $L$-reflection}.

Suppose first that $G/\langle t \rangle$ consists only of $L$-rotations.
In this case  $G/\langle t \rangle$ is cyclic and there are at most two simple closed curves that are fixed pointwise by some nontrivial element of $G/\langle t \rangle$ (see Lemma~\ref{lemma:orth-group}), thus the  singular set of $M/G$ is a link with at most three components.
We recall that the quotient of  a 3-sphere by an isometry with non-empty fixed point set is a 3-orbifold with the 3-sphere as  underlying topological space and a trivial knot as singular set, while the quotient by an isometry acting freely is a lens space. 
Therefore if  $G/\langle t \rangle$ is generated by elements with nonempty fixed point set the underlying topological space of  the orbifold $M/G$ is  a  3-sphere, otherwise it is a lens-space.

If the  group $G/\langle t \rangle$ contains a reflection of $L$, then $G/\langle t \rangle$  is either dihedral or isomorphic to $\mathbb{Z}_2$.
In any case, by Lemma~\ref{lemma:orth-group}, the underlying topological space of the quotient is a 3-sphere.

If $G/\langle t \rangle \cong \mathbb{Z}_2$, then the orbifold $M/G$ can be obtained as the quotient of $M/\langle t \rangle$ by a $L$-reflection; so the singular set of  $M/G$ is a theta-curve.
For what concerns the dihedral case, first of all we recall that for $G/\langle t \rangle$ to be dihedral means that it is generated, up to conjugacy, by a $L$-reflection $s$ and by a $L$-rotation $r$.
In Lemma~\ref{lemma:orth-group} we  defined two tori $T_1$ and $T_2$ such that $T_1\cup T_2=S^3$ that are left invariant by $G$. Referring to the notation of the proof of Lemma~\ref{lemma:orth-group}, we define   $C_1=\{(x,y,z,w)\in S^3|x^2+y^2=0\}$ and $C_2=\{(x,y,z,w)\in S^3|x^2+y^2=1\}$; these curves are  the cores of the tori $T_1$ and $T_2.$ We can suppose by conjugacy that the fixed-point sets  of the $L$-rotations are contained in $C_1\cup C_2.$

We consider then  the singular set of $S^3/(G/\langle t \rangle)$ (where the knot $L$ is not considered singular).
The singular set of $S^3/(G/\langle t \rangle)$ is contained in the union of the projection of $C_1 \cup C_2$ with the projection of  the fixed point sets of the $L$-reflections.

Let $n$ be the order of the $L$-rotation $r.$
We distinguish two cases: $n$ odd or $n$ even.

\bigskip

\begin{figure}[t]
\begin{center}
\includegraphics[width=8cm]{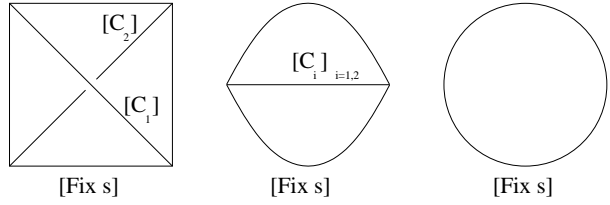}
\caption{Possible singular sets of $S^{3}/(G/\langle t \rangle)$  if $G/\langle t \rangle$ is dihedral and $n$ odd.}
\label{fig17}
\end{center}

\end{figure}

If $n$ is odd, then all the $L$-reflections are conjugate. 
Therefore if we consider the fixed point sets of  the  $L$-reflections, the projections of these fixed point sets are all identified in a unique closed curve in the quotient  $S^{3}/\langle r \rangle.$ The involution $s$ acts as a reflection also on the curves $C_{1}$ and $C_{2}$. If we consider the action of the projection of $s$ to $S^{3}/\langle r \rangle$, we can describe the possible combinatorial settings for the singular set. These are represented in Figure~\ref{fig17}. Notice that the singular set of $S^{3}/\langle r \rangle$ can also be empty or have only one component; the number of components of  $S^{3}/\langle r \rangle$  depends on the number of the simple closed curves, that are fixed pointwise by any $L$-rotation. We denote by $[C_1]$, $[C_2]$  and $[\Fix s]$ the projections   to $S^3$ of $C_1$, $C_2$ and  $\Fix s$, respectively.

However here there is something to remark.
The first graph is only one of the two combinatorial settings that can be built  with a closed curve and two edges with different endpoints.
The second possibility is the graph in Figure \ref{fig18}. By Lemma~\ref{lemma:orth-group} we can choose up to conjugacy $s:S^3\subset \R^4\rightarrow S^3$ as the map sending $(x,y,z,w)$ to $(x,-y,z,-w)$, and it is easy to see that the fixed point set of $r$  meets alternately $C_1$ and $C_2$, so this graph does not occur.

\begin{figure}[htb]
\begin{center}
\includegraphics[width=2cm]{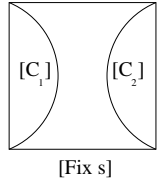}
\caption{Another combinatorial setting build with a closed curve and two edges.}
\label{fig18}
\end{center}
\end{figure}

To obtain the singular set of $M/G$,  to each  graph in Figure~\ref{fig17} we add $[L]$, where $[L]$ is the projection of $L$ to $S^3/(G/\langle t \rangle)$.
Since $s$ is a $L$-reflection, $[L]$ is an edge with  endpoints contained in $[\Fix s].$
Figure~\ref{fig21} contains all the possibilities, up to knotting; all the edges, except $[C_1]$ and $[C_2]$, must have singularity index two.

\begin{figure}[h]
\begin{center}
\includegraphics[width=7cm]{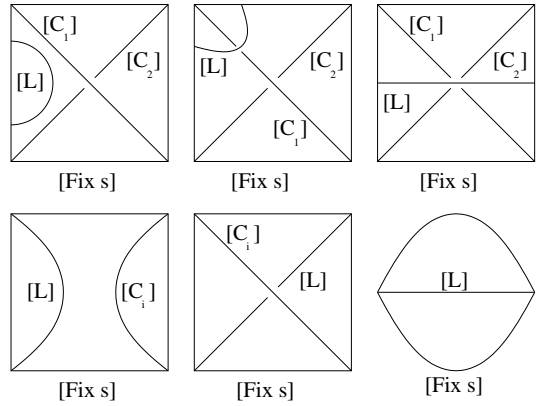}
\caption{Possible singular sets of $M/G$  when $G/\langle t \rangle$ is dihedral of order $2n$ with $n$ odd.}
\label{fig21}
\end{center}
\end{figure}

\begin{figure}[t]
\begin{center}
\includegraphics[width=11cm]{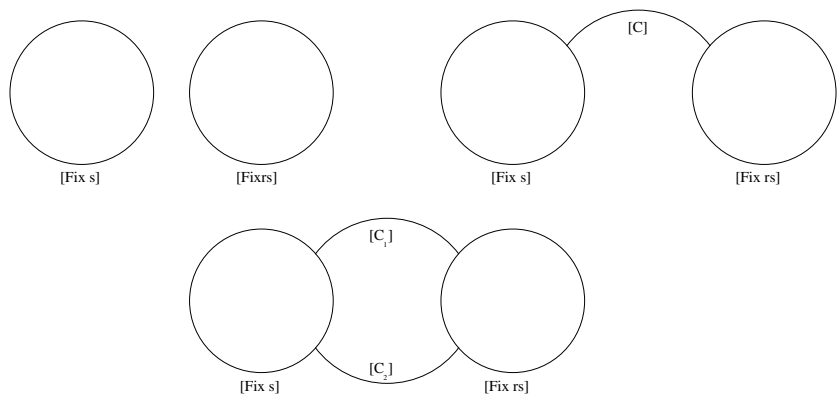}
\caption{Possible singular sets of $S^3/(G/\langle t \rangle)$  when $G/\langle t \rangle$ is dihedral of order $2n$ with $n$ even and the central involution acts freely.}
\label{fig19}
\end{center}
\end{figure}

On the other hand if $n$ is  even, then we do not  have a unique conjugacy class for all the $L$-reflections  of $G/\langle t \rangle$.
Since the fixed point sets of all the elements in the  same conjugacy class project to a single curve in the quotient $M/(G/\langle t \rangle)$, we take into consideration from now on only $\Fix s$ and $\Fix rs$, taking one representative element for each conjugacy class.
In this case the fixed point sets of the $L$-reflections are not all identified in the quotient, but are collected into two different subsets of the singular set of  $S^3/(G/\langle t \rangle)$, that we can denote simply by $[\Fix s]$ and $[\Fix rs]$.

Notice also that if $n$ is even, $r^{n/2}$ is a central element in $G/\langle t \rangle$,  hence we have a $L$-rotation fixing  setwise $\Fix s$ and $\Fix rs$.

The type of action of $r^{n/2}$ on   $\Fix s$ and $\Fix rs$ influences how these curves project to  $S^3/\langle r \rangle.$ In fact, according if this element has empty or non-empty fixed point set, different situations occur.

\begin{figure}[b]
\begin{center}
\includegraphics[width=12cm]{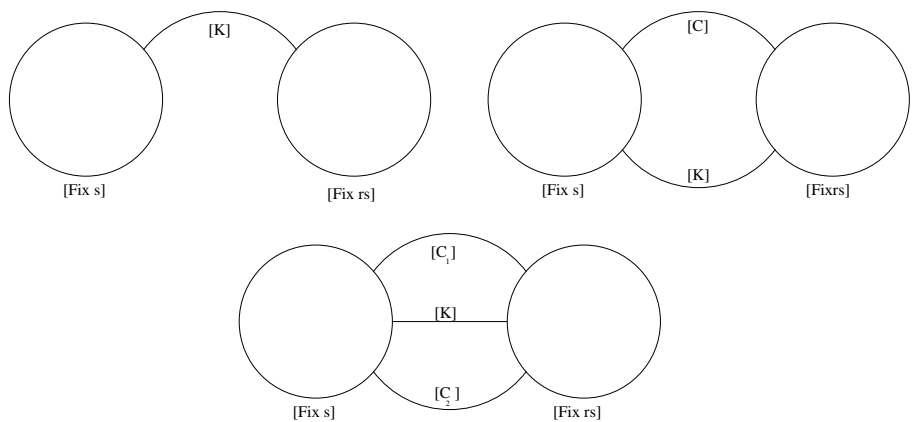}
\caption{Possible singular sets of $M/G$ when $G/\langle t \rangle$ is dihedral of order $2n$ with $n$ even and the central involution acts freely.}
\label{fig22}
\end{center}
\end{figure}

If $\Fix r^{n/2}$ is empty, then it acts on  $\Fix s$ and $\Fix rs$  as a rotation, and the  fixed point sets of $s$ and $rs$ project to two distinct closed curves in $S^{3}/\langle r \rangle$. We remark that $S^{3}/\langle r \rangle$ is  not a 3-sphere. 
The projection of $s$ to $S^{3}/\langle r \rangle$ is an involution which acts as a reflection on the projections of $C_1$ and $C_2$ and  such that its fixed point set consists of the projections of $\Fix s$ and $\Fix rs$. The possible combinatorial structures of the singular set of $S^3/(G/\langle t \rangle)$ are presented in Figure~\ref{fig19}.

We recall that $L$ meets both $\Fix s$ and $\Fix sr$ and for the singular set of $M/G$ we obtain one possibility for each of the graphs, as shown in Figure \ref{fig22} (all the edges but  $[C_1]$ and $[C_2]$ must have singularity index two).

If $\Fix r^{\frac{n}{2}}$ is non-empty, then clearly it coincides either with $C_{1}$ or with $C_{2}$.
In this case, since $r^{\frac{n}{2}}$ commutes both with  $s$ and with $rs$ and also with any other involution of $G/\langle t \rangle$, we obtain that $r^{n/2}$  acts as a strong inversion on both of the closed curves $\Fix s$ and $\Fix rs$ (see Lemma~\ref{lemma:orth-group}).
Therefore the projections of  $\Fix s$ and $\Fix rs$ are two arcs in $S^3/\langle r \rangle$ with both endpoints in common.
Moreover the endpoints of $[\Fix s]$ and $[\Fix rs]$ in $S^3/(G/\langle t \rangle)$ coincide with the endpoints of the 
arc given by the projection of $\Fix r^{\frac{n}{2}}$.
If $C_{1}$ is  $\Fix r^{\frac{n}{2}}$, as before $C_{2}$ links $[\Fix s]$ and $[\Fix rs]$; the roles of $C_1$ and $C_2$ can be exchanged. 
So the possible settings for the singular set of $S^3/(G/\langle t \rangle)$ are the ones represented in Figure~\ref{fig20}.

From these we can build three different graphs that, up to knottings,  are possible singular sets of $M/G$, two from the first graph of Figure~\ref{fig20} and one from the second.
The admissible results are shown in Figure \ref{fig23} (again all the edges, except $[C_1]$ and $[C_2]$, have singularity index two).

\begin{figure}[htb]
\begin{center}
\includegraphics[width=4cm]{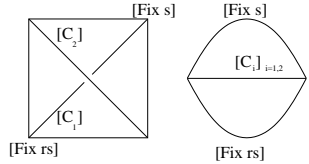}
\caption{Possible singular sets of $S^3/(G/\langle t \rangle)$  when $G/\langle t \rangle$ is dihedral of order $2n$ with $n$ even and the central involution does not act freely.}
\label{fig20}
\end{center}
\end{figure}

\begin{figure}[htb]
\begin{center}
\includegraphics[width=7cm]{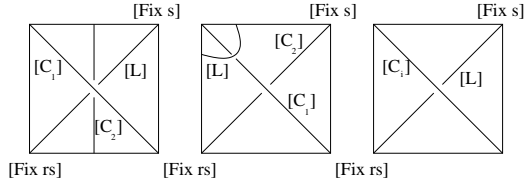}
\caption{Possible singular sets of $M/G$  when $G/\langle t \rangle$ is dihedral of order $2n$ with $n$ even and the central involution does not act freely.}
\label{fig23}
\end{center}
\end{figure}

\medskip

\noindent
\textbf{Case 2: $G$ is isomorphic to a subgroup of $\Z_2\times \S_4.$}
\smallskip

By \cite{R} the number of hyperelliptic involution is at most three.
We recall that by \cite{BS} and \cite{RuS} two hyperelliptic involutions commute and their fixed point sets meet in two points.
Here we consider  groups containing a non-central hyperelliptic involution.
In this case $G$ contains a conjugacy class of  hyperelliptic involutions with  two or three elements (the property to be hyperelliptic is invariant under conjugation).
These groups are described in the proof of \cite[Theorem 1]{MR} (case c) and d) - pages 7 and 8.)  

\medskip

\noindent
\textbf{Case 2.1: $G\cong \mathbb{D}_{8}$.}
\smallskip

This case occurs if we have a conjugacy class of hyperelliptic involutions with two elements which we denote by  $t_1$ and $t_2.$
By the properties of hyperelliptic involutions, $t_1$ and $t_2$ generate an elementary subgroup of rank 2 in $G$, and we have a subnormal series, $$ \langle t_{1} \rangle \lhd \langle t_{1},t_{2} \rangle \lhd G.$$

The orbifold  $M/\langle t_{1} \rangle $ has $S^3$ as underlying topological space and a knot as singular set.
We consider now  $M/\langle t_{1},t_{2} \rangle$ which is diffeomorphic to the quotient of $(M/\langle t_{1} \rangle)$ by the projection of $\langle t_{1},t_{2}\rangle$ to $M/\langle t_{1} \rangle$. Since $t_2$ has non empty fixed point set  and is a $\Fix t_1$-reflection, we obtain that the underlying topological space of $M/\langle t_{1},t_{2} \rangle$ is $S^3$ and its singular set is a knotted theta curve.

\begin{figure}[h]
\begin{center}
\includegraphics[height=1.8cm]{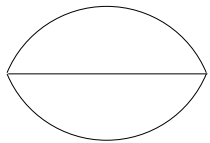}
\caption{A theta curve.}
\label{fig1}
\end{center}
\end{figure}

Now we consider the action of $G/\langle t_{1},t_{2} \rangle$ on $M/\langle t_{1},t_{2} \rangle.$
For a period two action on a  theta-curve $\theta$ we have only three possibilities that are represented in Figure~\ref{fig2}.

\begin{figure}[h]
\begin{center}
\includegraphics[width=11cm]{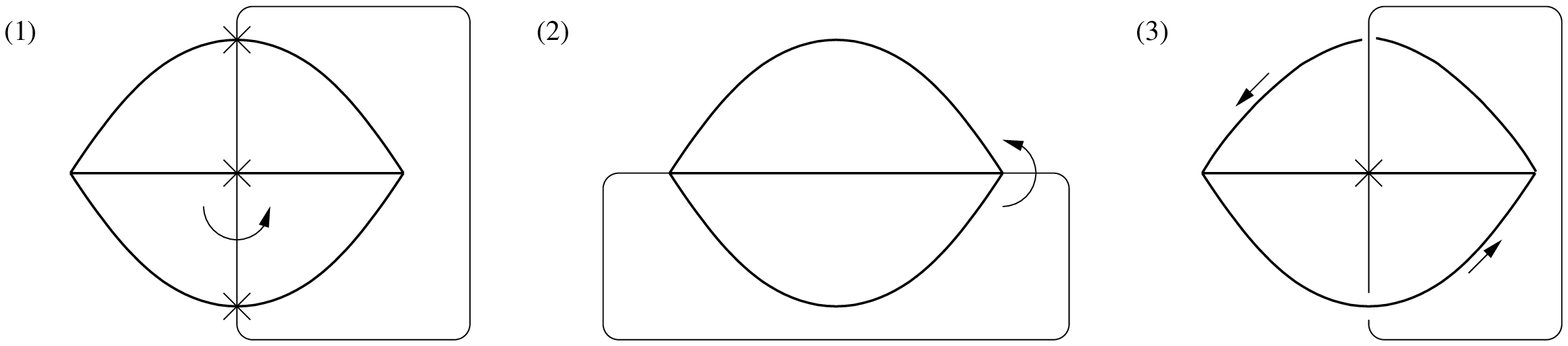}
\caption{Possible actions with period two on a theta curve.}
\label{fig2}
\end{center}
\end{figure}

We can make some remarks on the actions represented.
The first action fixes all the three edges and interchanges the vertices, therefore it acts as a rotation with period two around an axis that intersects all the three edges of the theta-curve and leaves the fixed point sets of $t_{1}$ and $t_{2}$ invariant; the second one acts as a rotation of order two around an axis that contains one of the edges of the theta-curve, therefore it fixes one entire edge and the vertices and interchanges the remaining two edges; the third one acts as a rotation again of order two, but this time the axis intersects only one of the edges and in only one point, therefore it fixes only the intersection point of the theta-curve with the axis, leaves setwise invariant the edge that intersects the axis and interchanges the other two edges and the vertices.
Since we already know that $\mathbb{D}_{8}$ interchanges the fixed point sets of $t_{1}$ and $t_{2}$, the first action is obviously not possible.
Since the non trivial element of $G/\langle t_{1},t_{2} \rangle$ has non-empty fixed point set, the orbifold $M/G$ has  $S^{3}$ as underlying topological space; the  singular set is, up to knottings, one of the two graphs represented in Figure \ref{fig3} (a theta curve and a "pince-nez" graph).
If we obtain a theta curve, then one of the edge has singularity index four (in this case the elements of order four in $G$ have nonempty fixed point set).

\begin{figure}[h]
\begin{center}
\includegraphics[width=10cm]{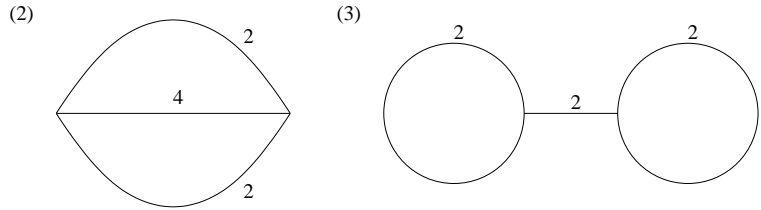}
\caption{A theta-curve and a "pince-nez" graph.}
\label{fig3}
\end{center}
\end{figure}

\medskip

\noindent
\textbf{Case 2.2: $G \cong \mathbb{Z}_{2} \times \mathbb{D}_{8}$.}
\smallskip

This is the second group occurring in  the proof of \cite[Theorem~1]{MR}, when  the existence of a conjugacy class of hyperelliptic involutions with two elements is assumed;  we denote again the two hyperelliptic involutions by  $t_1$ and $t_2.$

The first two quotients we consider are the same of the preceding case and we obtain that $M/\langle t_{1},t_{2} \rangle$ is known.

Let $A$ be the subgroup of $\Iso ^{+} M$ obtained by extending $\langle t_{1},t_{2} \rangle$ by a non-trivial element of the centre of $G$ ($t_1$ and $t_2$ cannot be in the center of $G$).
This means $A \cong \mathbb{Z}_{2} \times \mathbb{Z}_{2} \times \mathbb{Z}_{2}$ and $A \lhd \Iso ^{+} M$.
The subnormal series  we consider in this case is the following: $$ \langle t_{1} \rangle \lhd \langle t_{1},t_{2} \rangle \lhd A \lhd G$$
We consider now the projection of the action of $A$ on $M/\langle t_{1},t_{2} \rangle$.
$A$ acts leaving both  hyperelliptic involutions $t_{1}$ and $t_{2}$ fixed.
This means that it does not interchange the fixed point sets of $t_{1}$ and $t_{2}$.
This time the only possible action of the three represented in Figure \ref{fig2} is the first and the resulting singular set of $M/A$ can be represented, up to knottings, as in Figure~\ref{fig4}, by a tetrahedral graph. Since the action of $A/\langle t_{1},t_{2} \rangle$ is not free, the underlying topological space of $M/A$ is $S^3.$

\begin{figure}[htb]
\begin{center}
\includegraphics[width=2.5cm]{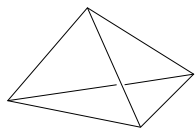}
\caption{Singular set of $M/A$: a tetrahedral graph.}
\label{fig4}
\end{center}
\end{figure}

The last extension to take into consideration is $A \lhd G$, in particular we consider the action of $G/A$ on $M/A.$
We ask what  actions of period two are combinatorially admissible  on a tetrahedral graph.
These are represented in Figure~\ref{fig5}. 

\begin{figure}[htb]
\begin{center}
\includegraphics[width=8cm]{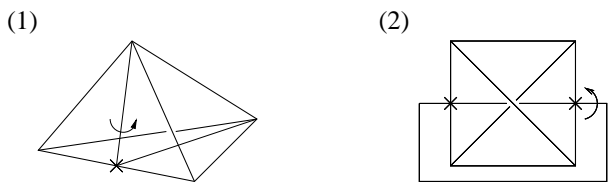}
\caption{Possible actions of period two on a tetrahedral graph.}
\label{fig5}
\end{center}
\end{figure}

The actions represented are respectively a rotation around an axis containing  one of the edges and meeting in a point the opposite one  (1) and a rotation around an axis meeting a couple of non adjacent edges in one point (2). Therefore we obtain that $M/G$ is an orbifold  with underlying topological space a sphere $S^{3}$ and with two possible singular sets, that are the graphs represented in Figure~\ref{fig6} (always up to possible knottings).

\begin{figure}[h]
\begin{center}
\includegraphics[width=7cm]{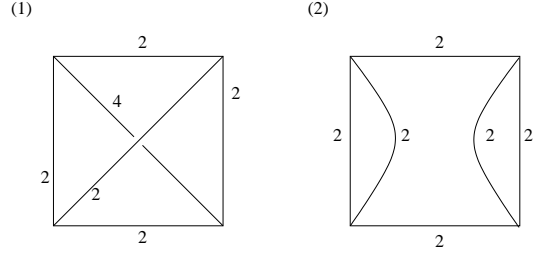}
\caption{Graphs that can occur as singular sets of $M/G$, when $G\cong \mathbb{Z}_{2} \times \mathbb{D}_{8}$.}
\label{fig6}
\end{center} 
\end{figure}

\medskip

\noindent
\textbf{Case 2.3: $G \cong \mathbb{A}_{4}$.}
\smallskip

The case $G \cong \mathbb{A}_{4}$ is the first we encounter in which $M$ admits a conjugacy class of three hyperelliptic involutions.
This condition is satisfied in all the remaining cases  (see proof of \cite[Theorem 1]{MR}) and we will denote   the three hyperelliptic involutions by $t_{1}$, $t_{2}$ and $t_{3}$.
Just as before, we consider a subnormal series of  subgroups of $\mathbb{A}_{4}$: $$\langle t_{1} \rangle \unlhd \langle t_{1},t_{2} \rangle \unlhd \Iso ^{+} M.$$
The first two quotients we need to perform are the same  encountered in the previous cases, therefore we begin analyzing the projection of the action of $G$ on the last quotient $M/\langle t_{1},t_{2} \rangle$, that we recall is an orbifold  with underlying topological space $S^{3}$ and singular set a theta-curve.
Noticing that the index of $\langle t_{1},t_{2} \rangle$ in $G$ is three, it follows that $G/\langle t_{1},t_{2} \rangle$ acts faithfully as a rotation with period three on $M/\langle t_{1},t_{2} \rangle$.
There is only one action of this type, that is a rotation around an axis that passes through the vertices of the theta-curve; the rotation permutes the three edges cyclically.

The action of $\frac{\Iso ^{+} M}{\langle t_{1},t_{2} \rangle}$ on $\frac{M}{\langle t_{1},t_{2} \rangle}$ is clearly not free, so the underlying topological space of $M/G$  is necessarily $S^{3}$.
Moreover we can notice that the result of this action is  again a theta-curve, but with different orders of singularity of the edges as shown in Figure~\ref{fig8}.

\begin{figure}[h]
\begin{center}
\includegraphics[width=2cm]{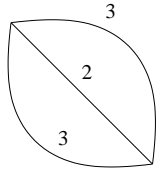}
\caption{Singular set of $M/G$, when $G \cong \mathbb{A}_{4}$.}
\label{fig8}
\end{center}
\end{figure}

\medskip

\noindent
\textbf{Case 2.4: $G \cong \mathbb{Z}_{2} \times\mathbb{A}_{4}$.}
\smallskip

The subnormal series of  subgroups we use this time is the following: $$ \langle t_{1} \rangle \lhd \langle t_{1},t_{2} \rangle \lhd A \lhd G,$$
where $A$ is, as before, the normal subgroup of $G$ isomorphic to $\mathbb{Z}_{2} \times \mathbb{Z}_{2} \times \mathbb{Z}_{2}$ obtained extending $\langle t_{1},t_{2} \rangle$ by an element of the centre of the group $G$.
In light of what we saw in Case 2.2, we already know that $M/A$ is an orbifold with underlying topological space the 3-sphere $S^{3}$ and singular set a tetrahedral graph.
Therefore we can analyze directly the  action of $G/A$ on $M/A$. The group $G/A$ has order three and  there is only one admissible action of order three on a tetrahedral graph  that  is a rotation around an axis passing through one of the vertices of the graph that  permutes  cyclically the three edges containing the vertex fixed by the action, as well the three edges not containing the vertex.
The singular set of $M/G$ is shown, up to knottings, in Figure~\ref{fig10}.

\begin{figure}[ht]
\begin{center}
\includegraphics[height=2cm]{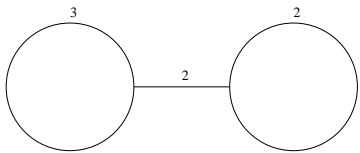}
\caption{Singular set of $M/G$, when $G \cong \mathbb{Z}_{2} \times \mathbb{A}_{4}$.}
\label{fig10}
\end{center}
\end{figure}

Notice that here too the orders of singularity of the edges are different.
Since the action of $G/A$  is not free, the underlying topological space of $M/G$ is $S^{3}.$

\medskip

\noindent
\textbf{Case 2.5: $G \cong \mathcal{S}_{4}$.}
\smallskip

Since $t_1,$ $t_2$ and $t_3$ are conjugate and commute, the subgroup $H$ of $G$ isomorphic to $\mathbb{A}_{4}$ contains the three hyperelliptic involutions.
To study $M/G$ we consider the following subnormal series of subgroups: $$ \langle t_{1} \rangle \lhd \langle t_{1},t_{2} \rangle \lhd H \lhd \Iso ^{+} M$$
We already know that the  underlying topological space of $M/H$ is the 3-sphere and that the singular set is the theta-curve represented in Figure \ref{fig8}, up to knottings.
Now the point is to understand how $G/H$ acts on $M/H$. Since it is clear that two fixed point sets that have different orders of singularity cannot be identified, the possible actions of $G/H$  are the three represented in Figure \ref{fig11}.

\begin{figure}[h]
\begin{center}
\includegraphics[width=12cm]{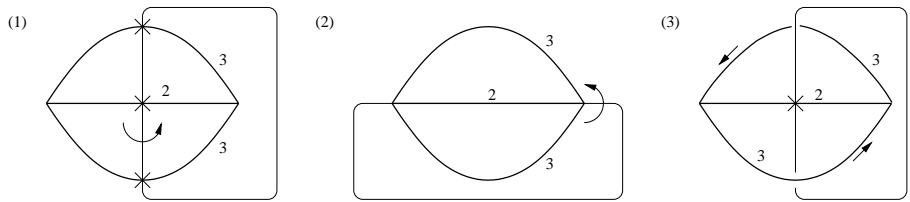}
\caption{Possible actions of order two on the singular set of $M/H$.}
\label{fig11}
\end{center}
\end{figure}

Nevertheless in this case we can exclude some actions.
For example the second action would produce as quotient a theta-curve that has one edge of singularity index four, and hence  we would have an element $\alpha \in \mathcal{S}_{4}$ of order four and  with non-empty fixed point set.
This would mean that $\alpha ^{2}$ is a hyperelliptic involution of $M$, then $\alpha$ projects to a symmetry that fixes pointwise  a 3-bridge and   by the positive solution of Smith  Conjecture, this is impossible.

In this case we have a reason to reject also the third action.
Notice that on $M/\langle t_{1},t_{2} \rangle$ acts also the dihedral group $\mathbb{D}_{6}\cong G/\langle t_{1},t_{2} \rangle$.
The dihedral group with six elements is generated by a transformation of order two and by a transformation of order three.
Since the transformation of order three acts on $M/\langle t_{1},t_{2} \rangle$, it must act also on its singular set, which is a theta-curve.
Therefore, as we have already seen, the fixed point set of this transformation must be non-empty.
Since the relation between the transformation of order two and the rotation of order three is dihedral in $\mathbb{D}_{6}$, the fixed point set of the involution is non-empty too and  the involution acts as a strong inversion on the fixed point set of the rotation of order three (see Proposition \ref{prop:normalizer}).
This means that the two fixed point sets intersect and this implies that the result of the action of $G/\langle t_{1},t_{2} \rangle$  on  $S^3$ produces as singular set of the quotient a theta curve with two edges of singularity index two and one edge of singularity index three. This theta curve must be contained in the singular graph of $M/G$, but this does not happen
for the "pince-nez" graph that we would obtain as singular set of the third action (while the tetrahedral graph resulting from the first case contains such a graph).
Therefore both the second and the third actions are not admissible.

Finally the only possible combinatorial setting of the singular set of the orbifold $M/G$ is the tetrahedral graph shown in Figure~\ref{fig12}.

\begin{figure}[htb]
\begin{center}
\includegraphics[width=3cm]{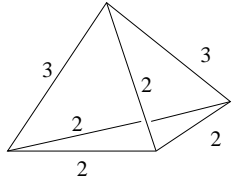}
\caption{Singular set of $M/G$, when $G \cong \mathcal{S}_{4}$.}
\label{fig12}
\end{center}
\end{figure}

\medskip

\noindent
\textbf{Case 2.6: $G \cong \mathbb{Z}_{2} \times \mathcal{S}_{4}$.}
\smallskip

We consider  the following subnormal series of subgroups of $G$ $$ \langle t_{1} \rangle \lhd \langle t_{1},t_{2} \rangle \lhd A \lhd J \lhd \Iso ^{+} M, $$
where $A$, as before, is the subgroup of $G$ isomorphic to $\mathbb{Z}_{2} \times \mathbb{Z}_{2} \times \mathbb{Z}_{2}$ obtained extending $\langle t_{1},t_{2} \rangle$ by an element that belongs to the centre of the group $\Iso ^{+} M$ and $A$ is the normal subgroup of $G$ isomorphic to $\mathbb{Z}_{2} \times \mathbb{A}_{4}$ and containing $A$ as normal subgroup. 
We already know that the  underlying topological space of $M/J$ is $S^{3}$ and that its singular set is the "pince-nez" graph represented in Figure~\ref{fig10} (always up to knottings).

It is clear that the action of $G/J$ on $M/J$ is a transformation with period two, since $\mathbb{Z}_{2} \times \mathbb{A}_{4}$ has index two in $\mathbb{Z}_{2} \times \mathcal{S}_{4}$.
The peculiarity of this case is that the singular set of the quotient is different according to the knotting of the "pince-nez" graph.
In fact the action on the graph is combinatorially unique, but the result depends on  the order of intersections of the two loops with the axis of the involution.
The action with period two on these graphs is always a rotation that fixes pointwise the middle edge and leaves invariant the two loops, operating a reflection on each of them, but we have to distinguish between the two cases represented by the graphs of Figure~\ref{fig14}. In the first case one of the two arcs, in which is divided the axis by the the intersection points between the axis and the first loop, does not contain any intersection point of the second loop and the axis; in the second case both the arcs contain an intersection point of the second loop and the axis.
\begin{figure}[t]
\begin{center}
\includegraphics[width=10cm]{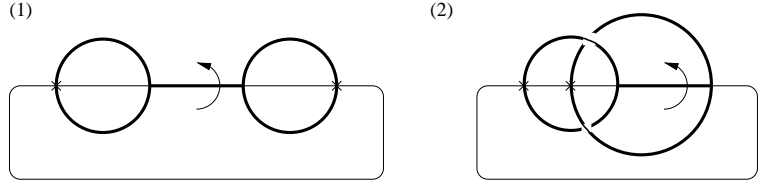}
\caption{Reflections on the "pince-nez" graphs.}
\label{fig14}
\end{center}
\end{figure}

We obtain two possible singular sets for the orbifold $M/G$, which are shown, up to knottings,  in Figure~\ref{fig15}.
In any case the underlying topological space is $S^3.$

In the table in Figure~\ref{final-table} we summarize the situation.

\begin{figure}[h!]
\begin{center}
\includegraphics[width=6cm]{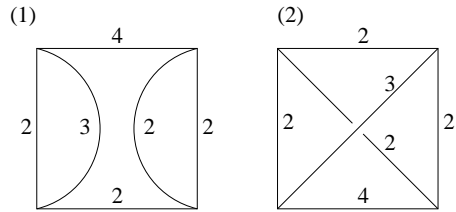}
\caption{Admissible singular sets for $M/G$, when $G\cong \mathbb{Z}_{2} \times \mathcal{S}_{4}$.}
\label{fig15}
\end{center}
\end{figure}

\begin{figure}[h!]
\begin{center}
\includegraphics[width=13cm]{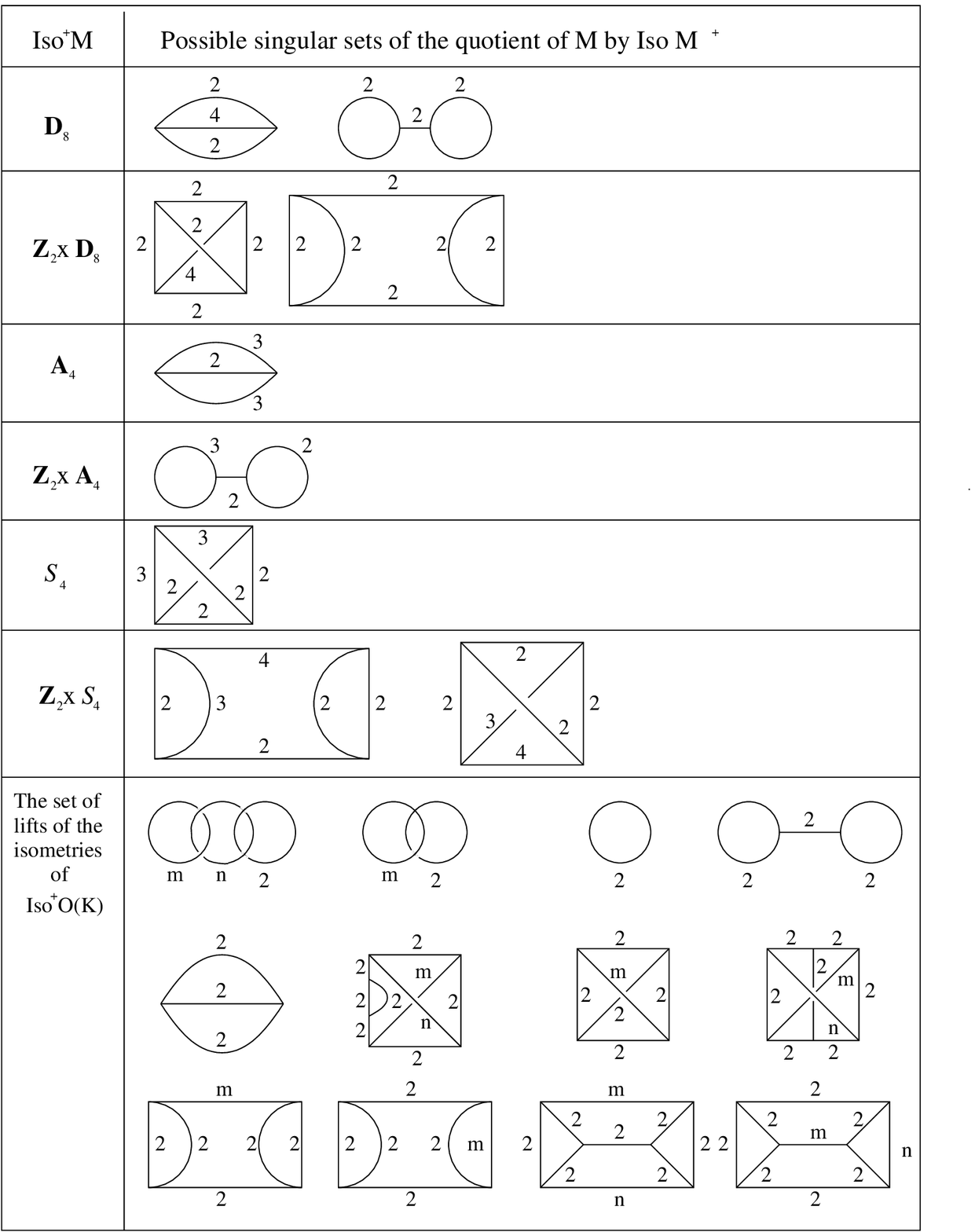}
\caption{Admissible singular set for $M/G$.}
\label{final-table}
\end{center}
\end{figure}

\section{The link case}\label{link}

In this section our aim is to generalize the work done in Chapter 3 on 2-fold branched covers of $\pi$-hyperbolic 3-bridge knots extending our considerations to 2-fold branched covers of $\pi$-hyperbolic 3-bridge links.
In light of the definition of bridge number we can deduce that 3-bridge links can have a maximum of three components.
Moreover, in contrast with the case of hyperbolic knots, the constituent knots of a hyperbolic link can also be all trivial.

We denote by $t$ the hyperelliptic involution that is the covering transformation of $L.$
In the last part of the  proof of \cite[Theorem 1]{R} it is proved that, if $L$ has more than one component, then  $t$ is central in $G.$ 
Therefore  we have that $M/G\cong (M/\langle t \rangle)/(G/\langle t \rangle)$.
We recall that the symmetry group of $L$ preserving the orientation of $S^3$ lifts to a finite group  acting on $M$ and containing $t$ in its center; 	by Thurston orbifold geometrization Theorem we can suppose that it is  contained up to conjugacy in $G$.

In this case, since  a long list of graphs would be produced (with respect to the one of the previous chapter), we don't consider the singular set of the quotients.
Our only aim this time is to analyze what the underlying topological space of this quotient is.

If $L$ is a link,  the isometry group of $M/\langle t \rangle$ is not as simple as when $L$ is a knot: it is no more true that it is  a subgroup of a dihedral group.
Let  $G_0$ be the normal subgroup of $G$ which consists of the elements fixing setwise each component of $\Fix t.$
We denote by $\bar G$ (resp. $\bar G_0$) the quotient $G/ \langle t \rangle$ (resp. $G_0/ \langle t \rangle$); the group $\bar G_0$ fixes setwise each component of $L.$

Clearly we have that the quotient group $\bar G/ \bar G_0$ is a subgroup of the symmetry group $\mathcal{S}_{n}$, where $n$ is the number of components of the link $L$, hence  in our case  $\bar G/ \bar G_0$  is either a subgroup of $\mathcal{S}_{3}$ or a subgroup of $\mathcal{S}_{2}$.
By Proposition~\ref{prop:normalizer},  $\bar G_0 \leq \mathbb{Z}_{2}( \mathbb{Z}_{m} \times 	\mathbb{Z}_{n} )$ for some $m,n \in \mathbb{N}$, i.e. $\bar G_0$ is isomorphic to a subgroup of a generalized dihedral group. Therefore $\bar G_0$ has an abelian subgroup of rank at most two of index at most two.
We separately analyze   the different cases.

\medskip

\noindent
\textbf{Case 1: $\bar G_0 \cong \mathbb{Z}_{2} ( \mathbb{Z}_{m} \times \mathbb{Z}_{n} )$,  i.e. $\bar G_0$ is generalized dihedral.}

We recall that the underlying topological space of $M/\langle t \rangle$ is $S^3.$ 
By  Lemma~\ref{lemma:orth-group}, we obtain that the underlying topological space of the orbifold  $(M/\langle t \rangle)/\bar G_0\cong M/G_0$  is $S^3$.
What remains to study now is the action of $G/G_0$ on $M/G_0$, but $G/G_0$ is either a  subgroup of $S_3$ or a subgroup of $S_2$, and hence it is cyclic or dihedral.
By Lemma~\ref{lemma:orth-group}, we obtain that the underlying topological space of $M/G\cong (M/G_0)/(G/G_0)$ is either $S^3$ or a  Lens space.

\medskip
\noindent
\textbf{Case 2: $\bar G_0 \leq ( \mathbb{Z}_{m} \times \mathbb{Z}_{n} )$, i.e. $\bar G_0$ is abelian.}

In this case what is missing with respect to the previous one is the action of a strong inversion on a component  of $L$.
Again two cases can occur: $\rank \bar G_0=1$ or $\rank \bar G_0=2$.

\smallskip
\noindent
\textbf{Case 2.1: $\rank \bar G_0=2$.}

In this case $\bar G_0$ admits a subgroup either of type $\mathbb{Z}_{p} \times \mathbb{Z}_{p}$ with $p$ an odd prime or of type $\mathbb{Z}_{2} \times \mathbb{Z}_{2}$.

We begin showing that the first case cannot occur.
Suppose that there exists a subgroup $D$ of $\bar G_0$  such that $D \cong \mathbb{Z}_{p} \times \mathbb{Z}_{p}$ for some prime $p > 2$.
Let $L_{i}$ be a connected component of the link $L$ and let $X_{i}$ be the subgroup of $D$ made of the isometries that fix pointwise the component $L_{i}$.
The group  $D/X_{i}$  acts faithfully  on the component $L_{i}$,  hence the group $D/X_i$ must be either cyclic or dihedral. Clearly it cannot be dihedral, being a quotient of an abelian group. This means that $\frac{D}{X_{i}}$ is cyclic, in particular isomorphic to $\mathbb{Z}_{p}$.
Therefore, by Lemma~\ref{lemma:orth-group} the group $X_i$ is one of the two subgroups of $D$ isomorphic to $\mathbb{Z}_{p}$ that admit nonempty fixed-point set.
This argument holds true for all the components of $L$:  the components of $L$ are two and, since $D$ can be simultaneously conjugate to block-diagonal matrices (see proof of Lemma~\ref{lemma:orth-group}), $L$ is the Hopf Link. 
Since  the Hopf link is a well known 2-bridge link, this leads to a contradiction.

Suppose now that $\bar G_0$  contains a subgroup $D$ isomorphic to $\mathbb{Z}_{2} \times \mathbb{Z}_{2}$, in this case we prove that the underlying topological space of $M/G_0$ is $S^3$.
Again we consider the quotient $D/X_{i}$, where $X_{i}$ is the normal subgroup of $D$ consisting of the isometries that fix pointwise the component $L_{i}$ of $L$.
In this case $D/X_{i}$ can be either cyclic or the whole group $D$.

We suppose that $X_i$ is trivial for some component $L_i$, thus $D$ contains an $L_i$-reflection which we denote by $\alpha$.
Suppose that exists an element $\beta \in \bar G_0$ of order different from two.
This element must  act as a rotation on the $i$-th component of $L$, but then, since $\alpha$ is a $L_{i}$-reflection, we have that $\alpha\beta\alpha^{-1}=\beta^{-1}.$ 
This implies that $\bar G_0$ admits a dihedral subgroup, that leads to a contradiction, being $\bar G_0$ abelian.
Therefore we obtain that  $\bar G_0$ is isomorphic to $\mathbb{Z}_{2} \times \mathbb{Z}_{2}$.
In this case we are done.
In fact, since $\mathbb{Z}_{2} \times \mathbb{Z}_{2}$ is always generated by a couple of involutions with non empty fixed point set, then the underlying topological space of $M/G_0\cong (M/\langle t \rangle)/(G_0/\langle t \rangle)$ is $S^{3}.$

On the other hand, if $X_{i}$ is non-trivial for each component $L_{i}$ of the link $L$, then again we obtain a contradiction by Lemma~\ref{lemma:orth-group}.
In fact, if $D$ contains two involutions with non empty fixed point set, then $L$ should be the Hopf link. 
If $D$ contains three involutions with non empty fixed point set, the components of $L$ should intersect in two points and this is  impossible.

Summarizing we obtained that if $\bar G_0$ has rank two, then $M/G_0$ has always underlying topological space $S^{3}$.
As in the previous case, the group $G/G_0$ is either cyclic  or isomorphic to $\mathcal{S}_{3}$, and this implies that the underlying topological space of $M/G$ is either $S^{3}$ or a lens space.

\smallskip
\noindent
\textbf{Case 2.2: $\rank \bar G_0=1$, i.e. $\bar G_0$ is cyclic.}

The quotient of $S^{3}$ by a cyclic group of isometries is an orbifold with underlying topological space either $S^{3}$ or a lens space.

We distinguish two cases: $\bar G_0$  admits at least one element acting with non empty fixed point set or $\bar G_0$ acts freely.

If $\bar G_0$ does not act freely, then each element in the normalizer of $G_0$  fixes setwise each  curve fixed by a nontrivial element of $G_0$.
In fact the different curves are fixed pointwise by   elements of different order.
This means that $G$ fixes setwise at least a closed curve, therefore, thanks to Proposition~\ref{prop:normalizer}, we can say that $G$ must be a subgroup of a generalized dihedral group  and we are done by Lemma~\ref{lemma:orth-group}.

On the other hand if $\bar G_0$ acts freely, then the analysis of the quotient $M/G$ is more complicated.
If $\bar G= \bar G_0$, then the underlying topological space is a lens space.

Otherwise the quotient $G/G_0$ is isomorphic  to $\mathbb{Z}_{2}$, $\mathbb{Z}_{3}$ or $\mathcal{S}_{3}$.

If  $\bar G/ \bar G_0 \cong \mathbb{Z}_{3}$, then by Lemma~\ref{lemma:orth-group} the group $\bar G$ is abelian and the underlying topological space of $M/G$ is either  $S^3$ or a lens space. If the components are three and $\bar G/ \bar G_0 \cong \mathbb{Z}_{2}$, then one of the component of $L$ is fixed setwise by $\bar G$ and we are done.

In the remaining cases we  can suppose that $\bar G$ has an abelian subgroup of index two.
Up to now we were able to prove that the underlying topological space of $M/G$ is either $S^3$ or a Lens space, unfortunately in the remaining cases some groups can admit as underlying topological space of the quotient a prism manifold. By analyzing the remaining groups case by case, we can deduce more information about the situations in which a prism manifold can occur, but at this point we prefer to give a shorter argument that simply exclude tetrahedral, octahedral and  icosahedral manifolds as underlying topological space of the quotient.
Since $\bar G$ has an abelian subgroup of index two,  the group leaves invariant a fibration of $S^3$ (see~\cite{MS}). The quotient orbifold $S^3/\bar G$ admits a Seifert fibration induced by the fibration of $S^3$ left invariant by $G.$ By~\cite[Lemma 2]{MS}, the base 2-orbifold $B$ of $S^3/\bar G$ is the  quotient of $S^2$ by the action of $\bar G$ (which is possibly non-faithfully.)  Since $G$ has an abelian subgroup of index 2,  either $B$ has a disk as underlying topological space or it is a 2-sphere with at most one singular point of index strictly greater then 2 (i.e. the cases with base 2-orbifold $S^2(2,3,3)$ $S^2(2,3,4)$ and $S^2(2,3,5)$ in ~\cite[Table 4]{MS} are excluded).

If $B$ has underlying topological space the 2-disk, then, by~\cite[Proposition 2.11]{Dun}, the underlying topological space of $S^3/\bar G$ (and hence of $M/G$) is either $S^3$ or a Lens space. 

On the other hand, if the base 2-orbifold has no boundary component, then, by forgetting the orbifold singularity of the  fibers, we obtain, from the Seifert fibration of $S^3/\bar G$,   a Seifert fibration of the underlying topological space of $S^3/\bar G.$ The base 2-orbifold of the underlying topological space of $S^3/\bar G$ can be obtained from $B$ by dividing the index of the singular points by the singularity index of the corresponding fibers. The Euler number of the fibration is not affected by the singularity forgetting process. Since $S^2(2,3,3)$ $S^2(2,3,4)$ and $S^2(2,3,5)$ are excluded as base 2-orbifolds, it turns out that the underlying topological space of $S^3/\bar G$ is $S^3$, a lens space or a Prism manifold (see~\cite{McC} and \cite[Table 2,3,4]{MS})

\section{An example}

In this section we describe an infinite family of hyperbolic 3-manifolds such that each of them is the 2-fold branched covering  of three inequivalent knots, two of them with bridge number equal to three and the third one with bridge number equal or greater than four.

For any triple of nonzero integers $(i,j,k)$ we can define the 3-bridge knot
$K_{ijk}$ presented in Figure~\ref{3bridge}.

\begin{figure}[h!]
\begin{center}
\includegraphics[width=5cm]{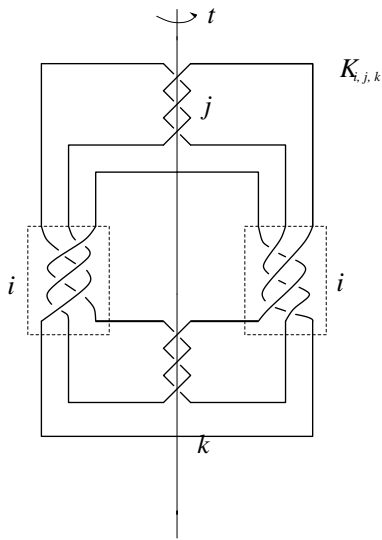}
\caption{The knot $K_{ijk}$.}
\label{3bridge}
\end{center}
\end{figure}

In  Figure~\ref{3bridge} we have also drawn the axis of a strong inversion $t$ of $K_{ijk}$. Let $\mathcal{O} (K_{ijk})$  be the orbifold with underlying topological space $S^3$ and $K_{ijk}$ as singular set of index 2. 
We consider the quotient orbifold $\mathcal{O}(\theta_{ijk}):= \mathcal{O} (K_{ijk})/\langle t \rangle$ which has $S^3$ as underlying topological space. The  singular set is a theta-curve
$\theta_{ijk}$ with edges $e_1$, $e_2$ and $e_3$ and constituent knots
$A_1=e_2\cup e_3$, $A_2=e_1\cup
e_3$ and $A_3=e_1\cup e_2$. The theta curve $\theta_{ijk}$ is represented in  Figure~\ref{theta} (how to obtain this planar diagram of $\theta_{ijk}$ is explained in  \cite{MR}).

\begin{figure}[h!]
\begin{center}
\includegraphics[width=5cm]{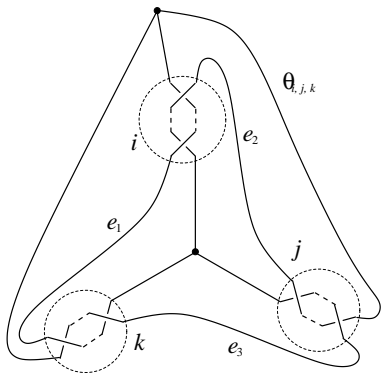}
\caption{The theta curve $\theta_{ijk}$.}
\label{theta}
\end{center}
\end{figure}

The three constituent knots
are trivial and the preimage of
$e_1$,
$e_2$ respectively
$e_3$ in the 2-fold cyclic branched covering of $S^3$ along $A_1$, $A_2$
respectively  $A_3$ is
$K_1=K_{ijk}$, $K_2=K_{jki}$ respectively $K_3=K_{kij}$.
Finally, if we take the two fold branched covering of $K_1$, $K_2$ or $K_3,$
we get the same manifold $M$: the
manifold $M$ is the $\mathbb{D}_4$ covering of $\mathcal{O} (\theta_{ijk})$.
In \cite{Z} it is proved that  $M$ is hyperbolic for $|i|,|j|,|k|$ sufficiently large. 
The isometry group of $M$  was studied in \cite{MR}. Here we consider  the case where two of the indices are equal, while the third one is different. If $\{i,j,k\}$ is not of the form  $\{lm,(l-1)m\}$, with $m$ and $l$ integers and $l$ even, then the isometry group of $M$ is isomorphic to  $\mathbb{D}_8$ and it does not contain any orientation-reversing isometry (see \cite[page 8]{MR}). We denote by $G$ the isometry group of $M$ and we suppose $i=j$.
In $G$ there are three conjugacy classes of involutions, two of them consist of hyperelliptic involutions and correspond to the 3-bridge knots. One of the hyperelliptic involutions is central in $G$. The involutions in the third conjugacy class are not hyperelliptic. In this section we prove that the quotient orbifold of $M$ by one of these involutions has underlying topological space $S^3$; the singular set of this orbifold  is a knot that has bridge number different from three. 
If $i=j$,  the theta curve $\theta_{iik}$ has a symmetry of order 2 exchanging the two vertices and leaving setwise invariant only one of the edges. In the diagram of $\theta_{iik}$ presented in Figure~\ref{theta2} the symmetry is evident, it consists of a $\pi$-rotation around the point $C$. We denote this involution by $\tau$.

\begin{figure}[h!]
\begin{center}
\includegraphics[width=6cm]{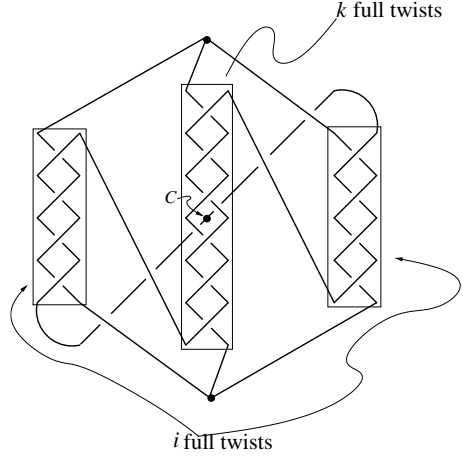}
\caption{Symmetry of $\theta_{iik}$.}
\label{theta2}
\end{center}
\end{figure}

The quotient of $\mathcal{O}(\theta_{iik})$ by $\tau$ is an orbifold with $S^3$ as underlying topological space and with the knotted pince-nez  graph represented in Figure~\ref{quotient} as singular set.  This orbifold is $M/G$ (this situation corresponds to  Case 2.1. in Section~\ref{knot}).   We denote by $l_0$ and $l_1$ the  loops of the pince-nez graph. In particular let $l_1$ be the projection of the axis of $\tau$ (the dotted line in Figure~\ref{quotient}).  

\begin{figure}[h!]
\begin{center}
\includegraphics[width=6cm]{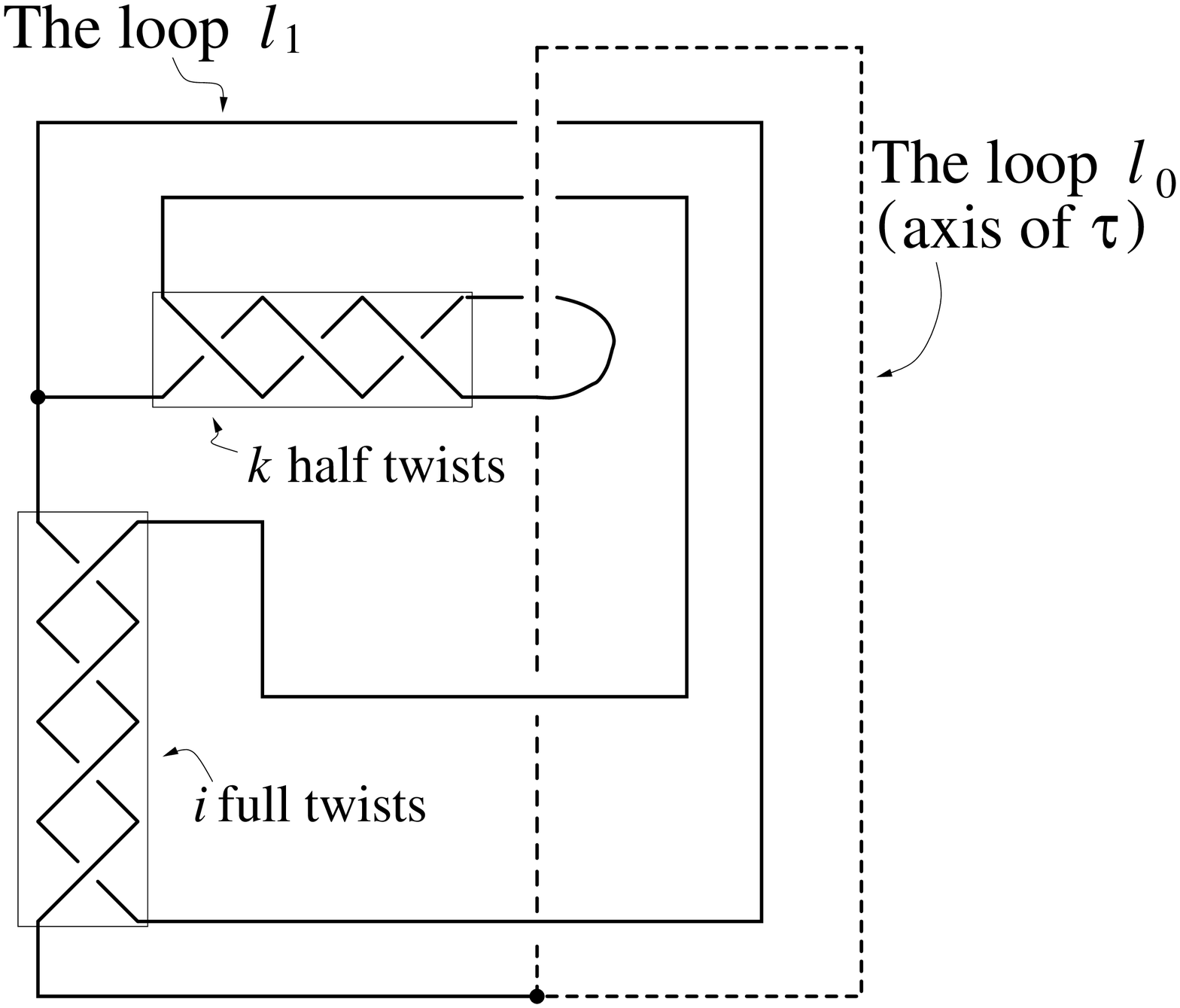}
\caption{The singular set of $M/G$.}
\label{quotient}
\end{center}
\end{figure}

Now we consider the orbifold obtained by taking the 2-fold covering of $M/G$ branched over the loop $l_1$. We remark that the loop $l_1$ is a trivial knot.  This gives an orbifold $\mathcal{O}(\Gamma_{ik})$ with $S^3$ as underlying topological space and the theta curve $\Gamma_{ik}$ represented in Figure~\ref{final-theta} as singular set (of singularity index 2). 

\begin{figure}[h!]
\begin{center}
\includegraphics[width=6cm]{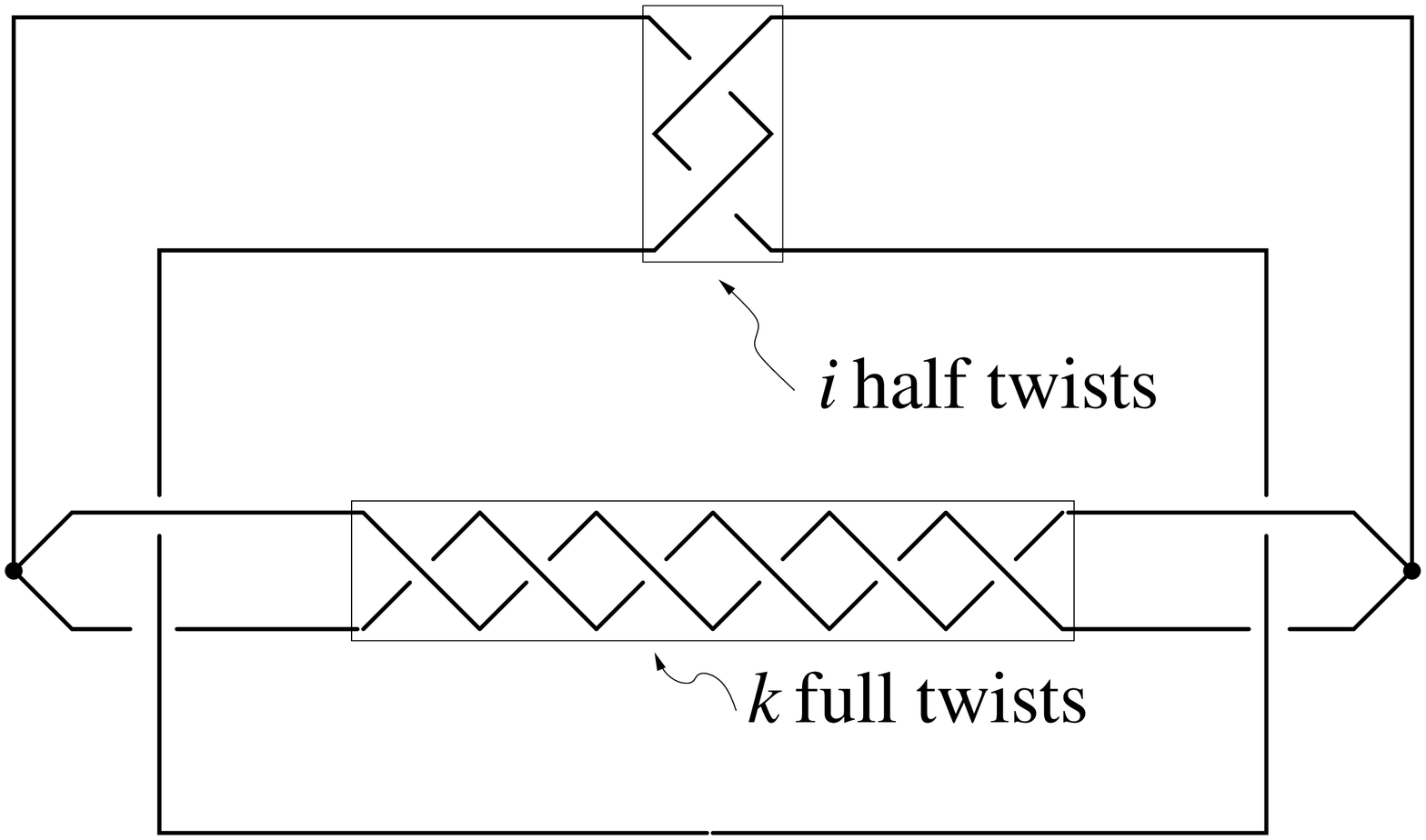}
\caption{$\Gamma_{iik}$}
\label{final-theta}
\end{center}
\end{figure}

To draw explicitly  $\Gamma_{ik}$ we use a planar diagram of the singular set  of $M/G$ where $l_1$ has a trivial projection. In Figure~\ref{transformation}  how to obtain such a diagram is explained. From this representation of the graph it is easy to reconstruct a diagram of $\Gamma_{ik}$.  To help the reader, in Figure~\ref{final-theta} we represent explicitly the axis of the involution acting on $\mathcal{O}(\Gamma_{ik})$ that gives $M/G$ as quotient.

\begin{figure}[h!]
\begin{center}
\includegraphics[width=10cm]{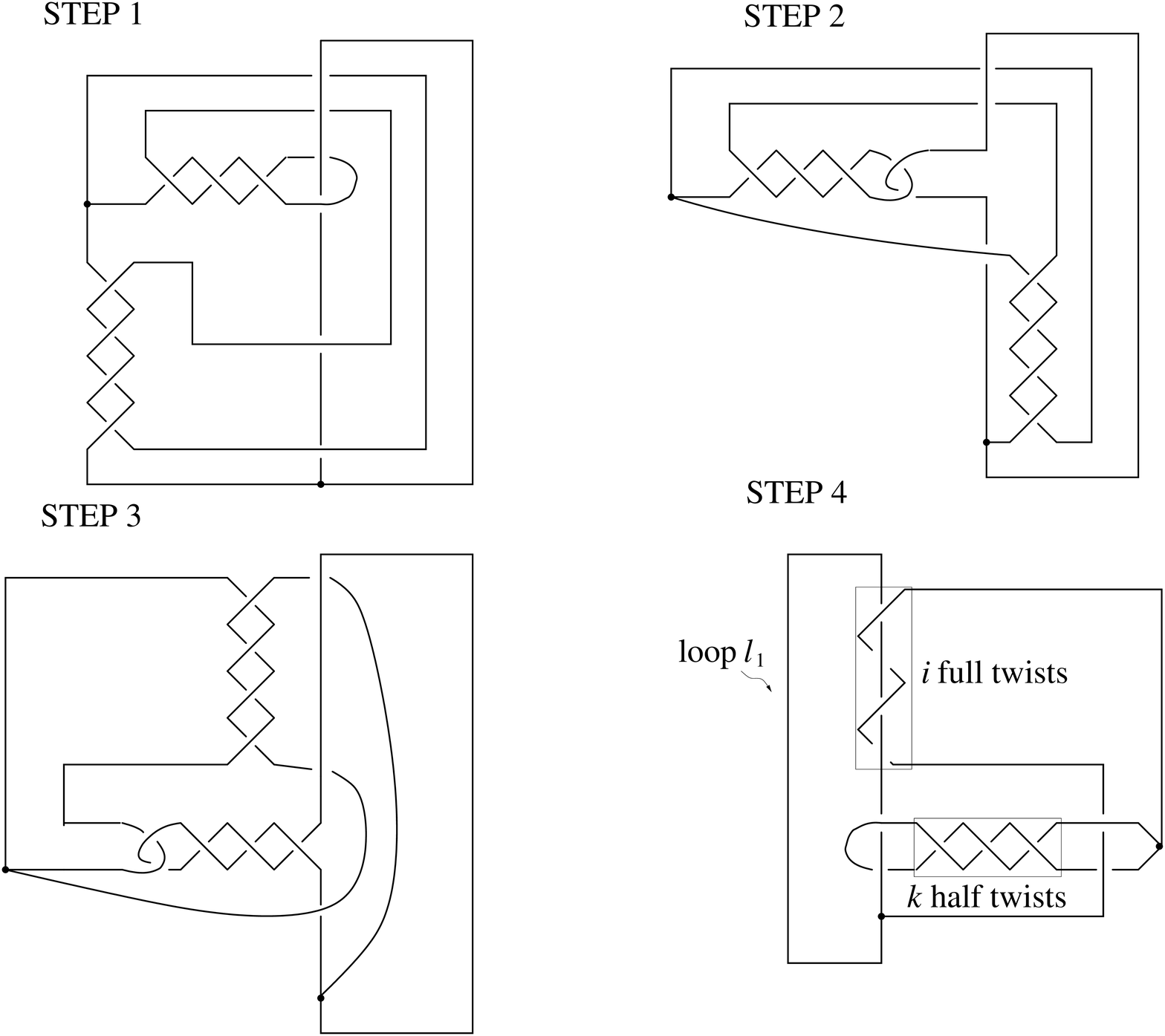}
\caption{$\Gamma_{iik}$}
\label{transformation}
\end{center}
\end{figure}

\begin{figure}[h!]
\begin{center}
\includegraphics[width=6cm]{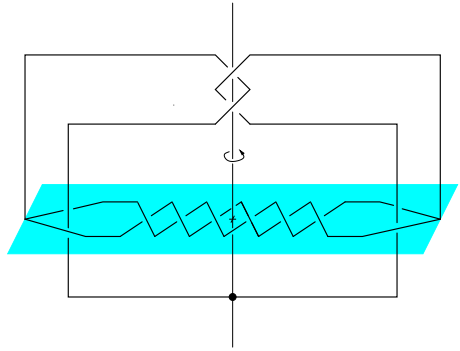}
\caption{$\Gamma_{iik}$}
\label{final-theta-bis.eps}
\end{center}
\end{figure}

The orbifold $\mathcal{O}(\Gamma_{ik})$ is the quotient of $M$ by the group generated by the central hyperelliptic involution  and by one involution that is not hyperelliptic (we call $u$ such an involution). The orbifold  $M/\langle u \rangle$ is the two fold  covering of $\mathcal{O}(\Gamma_{ik})$ branched over the appropriate constituent knot. We remark that all the three constituent knots of $\Gamma_{ik}$ are trivial.  This implies that the underlying topological space of $M/\langle u \rangle$ is $S^3$ and that $M$ is the 2-fold  covering of $S^3$ branched over  the knot that is the singular set of $M/\langle u \rangle.$
We denote this knot by $L_{ik}.$
Since $u$ is not hyperelliptic, the bridge number of $L_{ik}$ is different from 3. Since the only knot with bridge number one is the unknot and the 2-fold branched coverings of the 2-bridge knots are the lens spaces, these knots have bridge number greater or equal then four. 
We give a diagram of  $L_{ik}$ in Figure~\ref{new-knot-final} and Figure~\ref{new-knot} shows a procedure to get it. Indeed an explicit diagram of $L_{ik}$ is not necessary to get the properties we need.
We remark that the  Heegaard splitting induced by a minimal bridge presentation of  $L_{ik}$ is not of minimal genus.

\begin{figure}[h!]
\begin{center}
\includegraphics[width=5.5cm]{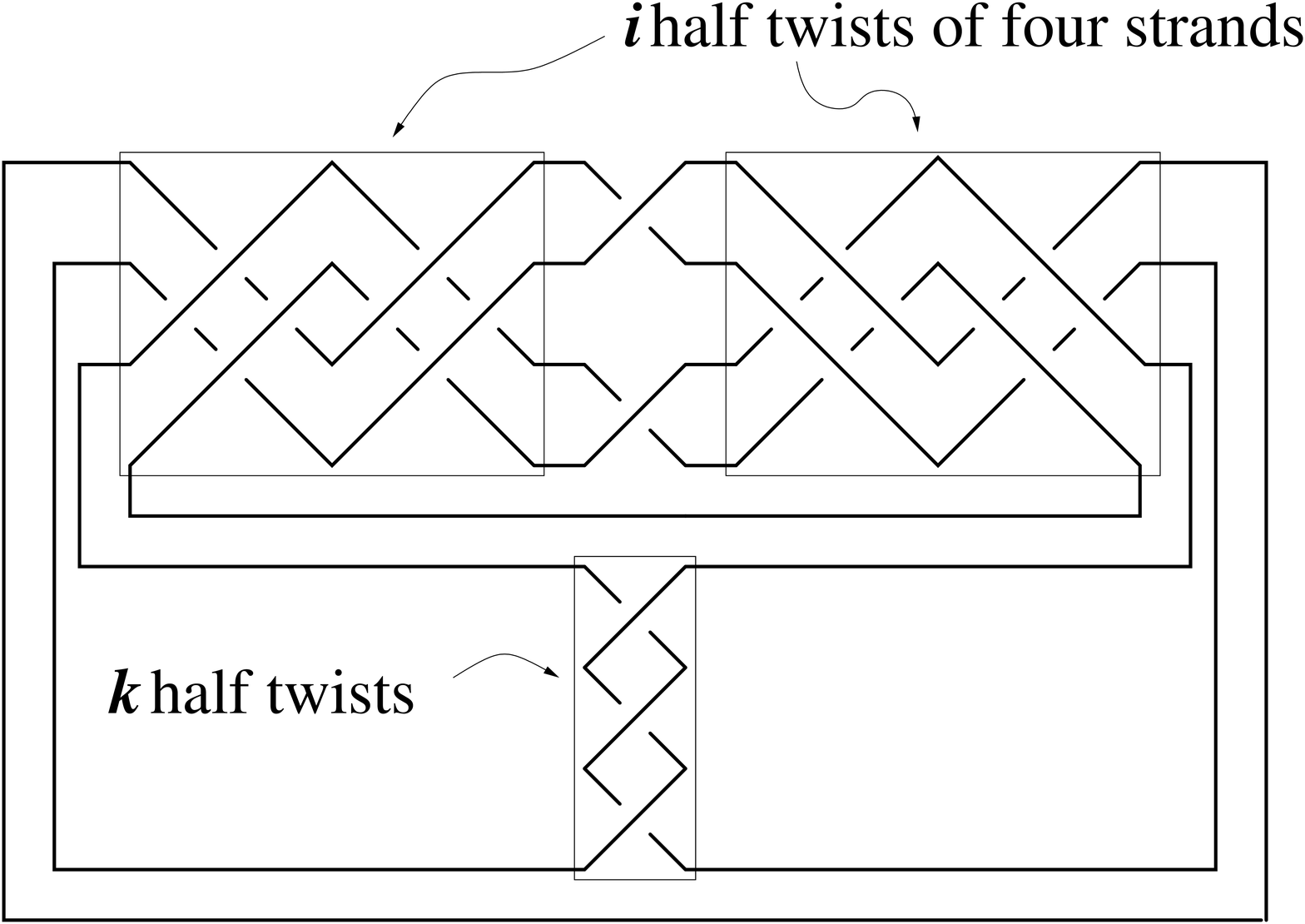}
\caption{the knot $L_{ik}$}
\label{new-knot-final}
\end{center}
\end{figure}

\begin{figure}[h!]
\begin{center}
\includegraphics[width=8.5cm]{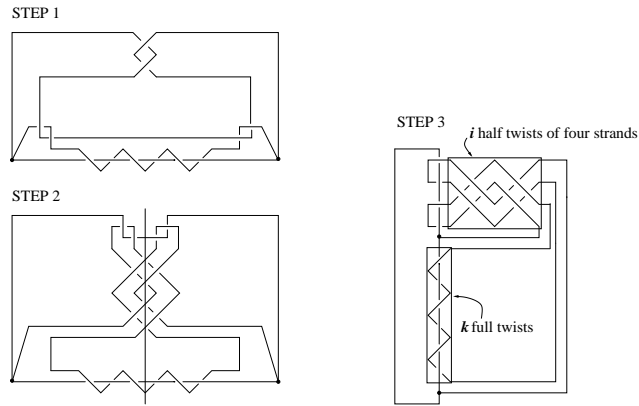}
\caption{Equivalent diagrams of $G_{ik}$}
\label{new-knot}
\end{center}
\end{figure}

\smallskip

\end{document}